\title[Generically Multiply Transitive Actions]{Groups Acting Generically Multiply Transitively on Solvable Groups}
\date{April 22, 2024}
\author{Ay\c{s}e Berkman}
\address{Mathematics Department, Mimar Sinan Fine Arts University, Istanbul, Turkey}
\email{ayse.berkman@msgsu.edu.tr, ayseberkman@gmail.com}
\author{Alexandre Borovik}
\address{Department of Mathematics, University of Manchester, UK}
\email{alexandre $\gg {\rm at} \ll $ borovik.net}
\thanks{\textit{Keywords:} Groups of \fmrd, generically transitive actions}
\thanks{\copyright\ 2024 Ay\c{s}e Berkman and Alexandre Borovik}
\date{\today}
\subjclass[2010]{20F11, 03C60}
\begin{document}
\newtheorem{problem}{Problem}
\newtheorem{lemma}{Lemma}[section]
\newtheorem{theorem}{Theorem}
\newtheorem{corollary}{Corollary}
\newtheorem*{corollary3}{Corollary 3}
\newtheorem{proposition}[lemma]{Proposition}
\newtheorem{remark}[lemma]{Remark}
\newtheorem{fact}[lemma]{Fact}
\newtheorem{question}[lemma]{Question}
\theoremstyle{definition}
\newtheorem*{definition}{Definition}
\newtheorem{conjecture}{Conjecture}
\newtheorem*{examples}{Examples}
\newcommand{\acf}{algebraically closed field }
\newcommand{\acfd}{algebraically closed field}
\newcommand{\acfs}{algebraically closed fields}
\newcommand{\fmr}{finite Morley rank }
\newcommand{\rk}{\operatorname{rk}}
\newcommand{\ddeg}{\operatorname{deg}}
\newcommand{\psrk}{{\rm psrk}}
\newcommand{\stab}{{\rm stab}}
\newcommand{\fmrd}{finite Morley rank}
\newcommand{\bi}{\begin{itemize}}
\newcommand{\ei}{\end{itemize}}
\newcommand{\sll}{\operatorname{SL}}

\begin{abstract}
In this work, we complete the classification of generically multiply transitive actions of groups on solvable groups in the finite Morley rank setting. We prove that if $G$ is a {connected} group of \fmr acting definably, faithfully and generically $m$-transitively on a connected solvable group $V$ of \fmr where $\rk(V)\leqslant m$, then $\rk(V)=m$, $V$ is a vector space of dimension $m$ over an \acf $F$, $G\cong \operatorname{GL}_m(F)$, and the action is equivalent to the natural action of $\operatorname{GL}_m(F)$ on $F^m$. This generalises our previous work~\cite{bbnotsharp}. As an application of our result, we classify definably primitive groups of \fmr and affine type acting on a set $X$ with a generic transitivity degree of $\rk(X)+1$.
\end{abstract}

\maketitle

\section{Introduction}

It has been known for a long time that sharp $n$-transitivity, for $n\geqslant 4$, is a very restrictive condition on group actions {as the following shows}.

 \begin{fact} {\rm (Jordan, Hall, Tits)}	Let $G$ be a group with a sharply $n$-transitive action for some $n\geqslant 4$, then $G$ is finite and isomorphic to one of the following groups:  ${\rm Sym}(n)$, ${\rm Sym}({n+1})$, ${\rm Alt}({n+2})$,
  $M_{11}$ if $n=4$, or
  $M_{12}$ if $n=5$.
 \end{fact}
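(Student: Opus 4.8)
The plan is to run a downward induction on $n$, reducing the whole statement to the cases $n=4$ and $n=5$ plus one explicit non-existence assertion. The basic tool is the remark that if $G$ acts sharply $(k+1)$-transitively on $\Omega$, then a point stabiliser $G_\omega$ acts sharply $k$-transitively on $\Omega\setminus\{\omega\}$: given two ordered $k$-tuples of distinct points avoiding $\omega$, prepend $\omega$ and apply sharp $(k+1)$-transitivity and its uniqueness clause. Iterating, the pointwise stabiliser of $j$ points is sharply $(n-j)$-transitive on the remaining set; for $j=n-1$ it is regular, so $|\Omega|$ is finite the moment one such stabiliser is finite, and then $|G|=|\Omega|(|\Omega|-1)\cdots(|\Omega|-n+1)$. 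The induction step is then purely an order count: if $n\geqslant 7$ and $G_\omega$ is already known to be one of $\operatorname{Sym}(n-1),\operatorname{Sym}(n),\operatorname{Alt}(n+1)$ (degrees $n-1$, $n$, $n+1$), then $\Omega$ has $n$, $n+1$ or $n+2$ points, whence $|G|=n!$, $(n+1)!$ or $(n+2)!/2$, and a group of that order acting faithfully on that many points is forced to be $\operatorname{Sym}(n)$, $\operatorname{Sym}(n+1)$ or the (unique) index-two subgroup $\operatorname{Alt}(n+2)$; one checks directly that each of these really is sharply $n$-transitive.

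So the theorem reduces to: (i) classifying the sharply $4$-transitive groups; (ii) showing every sharply $n$-transitive group ($n\geqslant 4$) is finite, so that the order count is legitimate; and (iii) showing that $M_{11}$ has a unique one-point extension, namely $M_{12}$, while $M_{12}$ has no one-point extension to a sharply $6$-transitive group — equivalently, there is no sharply $6$-transitive group of degree $13$. Granting these, the cases $n=5,6$ follow from the $n=4$ classification exactly as in the induction step, and $n\geqslant 7$ follows by induction. For (i) I would push the stabiliser tower one more level: the stabiliser of two points in a sharply $4$-transitive group $G$ of degree $N$ is sharply $2$-transitive of degree $N-2$, and a finite sharply $2$-transitive group has a regular normal subgroup (its Frobenius kernel) which is elementary abelian, since $G_\omega$ permutes its non-identity elements transitively; thus $N-2=q$ is a prime power. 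The stabiliser of one point is then sharply $3$-transitive of degree $q+1$, hence by Zassenhaus's theorem it is $\operatorname{PGL}_2(q)$ or the near-field variant $M(q)$ (the latter only for an odd square $q$). Finally a counting argument in the spirit of Jordan — comparing the cycle types of involutions of $G$ and the way the $2$-transitive action of $G_\omega$ on $q+1$ points sits inside the action of $G$ on $q+2$ points — forces $q\in\{2,3,4,9\}$, i.e. $N\in\{4,5,6,11\}$, yielding $\operatorname{Sym}(4),\operatorname{Sym}(5),\operatorname{Alt}(6),M_{11}$.

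For the existence and uniqueness of the exceptional groups I would go through the associated Steiner systems: a sharply $4$-transitive group of degree $11$ preserves an $S(4,5,11)$ and a sharply $5$-transitive group of degree $12$ preserves an $S(5,6,12)$, these designs are unique up to isomorphism, and their automorphism groups $M_{11}$, $M_{12}$ act with exactly the required sharpness; the same circle of ideas shows that no $S(6,7,13)$ compatible with a sharply $6$-transitive action exists, which is the non-existence in (iii). The last ingredient, finiteness — item (ii) — is where the infinite case genuinely enters: infinite sharply $3$-transitive groups do exist (they are of $\operatorname{PGL}_2$-type over an infinite near-field), so one must show that such a configuration cannot be enlarged to a sharply $4$-transitive action, and this is Tits's contribution, obtained by a careful analysis of the near-field and of the induced ``cross-ratio'' on its projective line. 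I expect the two hands-on pieces — the identification of $M_{11}$ (and the exclusion of a hypothetical $M_{13}$) and the ruling out of infinite examples — to be the main obstacle; the inductive skeleton and the order arithmetic are routine by comparison.
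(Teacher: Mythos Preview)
The paper does not prove this statement at all: it is labelled a \emph{Fact}, and the accompanying paragraph is purely a bibliographic \emph{Reference}, attributing the finite case to Jordan and the extension to arbitrary (hence infinite) groups to Hall and Tits independently, with a remark on the correct Tits citation. There is no argument in the paper to compare your proposal against.

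Your sketch is a reasonable outline of how the classical proof actually runs --- the downward induction via point stabilisers, the reduction to Zassenhaus's classification of sharply $3$-transitive groups and thence to Jordan's $q\in\{2,3,4,9\}$ analysis for degree $q+2$, and the separate Hall--Tits argument ruling out infinite examples --- so nothing you wrote is wrong in spirit. But since the paper treats this as imported background, a full proof is not expected here; a one-line citation in the style of the paper's own \emph{Reference} paragraph would suffice.
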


\noindent
\textit{Reference.} Jordan proved the result for finite groups \cite{jordan}. Later, Hall \cite{hall} and Tits \cite{tits1951}, independently, generalised this result; in particular, they showed that no infinite group has a sharply $n$-transitive action where $n\geqslant 4$. Even though Tits reference is usually given as \cite{tits1952}, the correct reference seems to be \cite[Chapitre IV]{tits1951}, which can be easily accessed in \cite{titscompleteworks}.

{When the sharpness assumption is removed, then the following is known for finite groups.}

  \begin{fact}	 {Apart from the symmetric and alternating
  		groups, the only finite $4$-transitive groups are the Mathieu groups $M_{11}$, $M_{12}$, $M_{23}$ and $M_{24}$.}
 \end{fact}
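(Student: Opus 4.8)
The plan is to derive this from the classification of finite $2$-transitive permutation groups, which in turn rests on the Classification of Finite Simple Groups (CFSG): no CFSG-free proof is known, and this dependence is the genuine obstacle. Since a $4$-transitive group is \emph{a fortiori} $2$-transitive, I would start from the classical dichotomy (Burnside): a finite $2$-transitive group $G$ on a set $\Omega$ has a unique minimal normal subgroup $N$, and either $N$ is elementary abelian and regular, so that $G = N \rtimes G_\alpha$ is of \emph{affine type} with $|\Omega| = |N| = p^d$, or $N$ is nonabelian simple with $N \leqslant G \leqslant \operatorname{Aut}(N)$, the \emph{almost simple type}. It then remains to run through the two resulting lists and single out the groups that are $4$-transitive.

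In the affine case, if $G = N \rtimes H$ is $4$-transitive on $N$ then the stabiliser $H = G_0$ is $3$-transitive on $N \setminus \{0\}$. By Hering's theorem --- the CFSG-based classification of $2$-transitive affine groups --- $H$ lies in a short explicit list of subgroups of $\Gamma\mathrm{L}_d(p)$, and a direct inspection shows that none of these is $3$-transitive on the nonzero vectors once $p^d > 4$. For instance, in $\operatorname{AGL}_d(2)$ the vector $e_1 + e_2$ is fixed by the stabiliser of $0, e_1, e_2$, so that group is $3$- but not $4$-transitive as soon as $d \geqslant 3$. The residual cases $p^d \leqslant 4$ yield only $\operatorname{Sym}(2)$, $\operatorname{Sym}(3)$ and $\operatorname{Sym}(4)$, already accounted for. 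So the affine type contributes nothing beyond small symmetric groups.

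The almost simple case carries the substance of the argument: one walks through the explicit list of $2$-transitive almost simple groups and reads off the degree of transitivity in each family. The natural action of $\operatorname{Alt}(n)$ is $(n-2)$-transitive, producing the symmetric and alternating groups. The action of $\operatorname{PSL}_d(q)$ on $\operatorname{PG}_{d-1}(q)$ is merely $2$-transitive for $d \geqslant 3$, and for $d = 2$ it is at most $3$-transitive (via the sharply $3$-transitive $\operatorname{PGL}_2(q)$). The symplectic groups $\operatorname{Sp}_{2d}(2)$ in their two $2$-transitive actions, the unitary groups $\operatorname{PSU}_3(q)$, the Suzuki groups $\mathrm{Sz}(q)$ and the Ree groups ${}^2G_2(q)$ are, in general, at most $3$-transitive, while the remaining sporadic entries --- $\operatorname{PSL}_2(11)$ of degree $11$, $\operatorname{Alt}(7)$ of degree $15$, $\mathrm{HS}$ of degree $176$ and $\mathrm{Co}_3$ of degree $276$ --- are exactly $2$-transitive; the handful of small members of these families that happen to be $4$-transitive coincide, via exceptional isomorphisms (such as $\operatorname{Sp}_4(2) \cong \operatorname{Sym}(6)$), with symmetric or alternating groups already on the list. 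The Mathieu groups are the only genuinely new highly transitive examples: $M_{11}$ and $M_{23}$ are $4$-transitive, and $M_{12}$ and $M_{24}$ are $5$-transitive. Intersecting ``$4$-transitive'' with the complete list therefore leaves exactly $\operatorname{Sym}(n)$, $\operatorname{Alt}(n+2)$ and $M_{11}, M_{12}, M_{23}, M_{24}$, as claimed.

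Thus the main difficulty is foundational rather than combinatorial: the whole argument stands on the two CFSG-dependent classifications (Hering's theorem and the list of $2$-transitive almost simple groups). Granting those, the remainder is bookkeeping --- tracking, socle by socle, how far a point-stabiliser chain stays transitive --- which is routine. For a self-contained treatment one would simply invoke this as a standard fact from the permutation-group literature, such as Dixon--Mortimer or Cameron.
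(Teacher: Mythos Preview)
Your proposal is correct and follows exactly the route the paper indicates: the paper does not give a proof but simply records that the fact follows from the classification of finite $2$-transitive groups (hence from CFSG), citing Cameron's \emph{Permutation Groups} and Smith's user's guide. You have merely unpacked that reference into the standard Burnside dichotomy plus Hering's theorem and the almost-simple list, which is precisely the bookkeeping those sources carry out.
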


\noindent
\textit{Reference.}  {It follows from the classification of finite $2$-transitive groups, which was proved using the classification of finite simple groups;} see \cite[Theorem 4.11]{Cameron1999} and the discussion in \cite[Section 1.6]{smith}.

\medskip

In the above facts, $M_{k}$ denotes the Mathieu groups; they are among the 26 sporadic finite simple groups, their multiply transitive actions are on sets of size $k$. {The maximum transitivity degree for $M_{11}$ and $M_{23}$ is 4; for $M_{12}$ and $M_{24}$ it is 5.}

The following theorem shows that multiply transitive  {rational} actions of algebraic groups are also rare.

  \begin{fact} {\rm \cite{knop}} If a reductive algebraic group acts  {rationally} on an irreducible variety $n$-transitively for some $n\geqslant 2$, then  $n=2$ or $3$. Moreover, the action is equivalent to the action of $\operatorname{PGL}_{m+1}$ on  {the projective space} $\mathcal{P}_m$ for some $m\geqslant 1$, or to the action of $\operatorname{PGL}_{2}$ on  {the projective line} $\mathcal{P}_1$, respectively.
 \end{fact}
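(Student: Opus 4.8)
The plan is to reduce the statement to the structure theory of reductive groups plus a short combinatorial classification of flag varieties; write $G$ for the reductive group, $X$ for the irreducible variety and $d=\dim X\geqslant 1$. Since $n$-transitivity entails transitivity, fix $x_0\in X$ and identify $X$ with $G/H$ where $H=G_{x_0}$; then $2$-transitivity says precisely that $H$ acts transitively (or, in the generic reading, with a dense orbit) on $X\setminus\{x_0\}$, so $H$ has a dense orbit on $X$ with complement a single point. The first — and main — step is to deduce from this that $H$ is parabolic, equivalently that $X$ is a complete homogeneous space $G/P$. Here reductivity of $G$ is essential: contrast the one-dimensional affine group acting $2$-transitively on $\mathbb{A}^1$ with non-reductive, non-parabolic point stabiliser $\mathbb{G}_m$. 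Using that $H$ is observable and the classification of homogeneous spaces of a reductive group whose isotropy subgroup has a dense orbit (the rank-one models), one checks that each of them except projective space fails $2$-transitivity already at the level of a single point stabiliser; on a smooth quadric, for instance, the stabiliser of a point has three orbits — the point, its punctured tangent section, and the rest.

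Once $X=G/P$, one passes to the effective action. The connected centre of $G$ lies in every parabolic, so the action factors through a semisimple group; and a product $G_1/P_1\times G_2/P_2$ of two positive-dimensional flag varieties is never $2$-transitive, because a pair of distinct points with equal first coordinate cannot be moved to a pair with distinct first coordinates — so one may assume $G$ simple. The $P$-orbits on $G/P$ are the Schubert cells, indexed by the double cosets $W_P\backslash W/W_P$ in the Weyl group, so $2$-transitivity forces $|W_P\backslash W/W_P|=2$, i.e. $W$ acts $2$-transitively on $W/W_P$. Going through the finite Weyl groups and their parabolic subgroups, the only solution is $W=\operatorname{Sym}(m+1)$ in type $A_m$ acting on its natural $(m+1)$-element set, with $W_P=\operatorname{Sym}(m)$; this $P$ is the maximal parabolic attached to an end node of the Dynkin diagram, so $G/P\cong\mathcal{P}_m$ and the effective action is that of $\operatorname{PGL}_{m+1}$ — which one verifies directly is $2$-transitive.

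It remains to pin down the $3$- and higher-transitive cases. A $3$-transitive action is in particular $2$-transitive, hence of the form $\operatorname{PGL}_{m+1}$ on $\mathcal{P}_m$; and $3$-transitivity then says that the stabiliser $Q$ of two points acts transitively on $\mathcal{P}_m$ minus those two points. Realising $Q$ as the stabiliser of two coordinate lines, a direct block-matrix computation gives its orbits on the complement: $\mathcal{P}_m\setminus\mathcal{P}_1$ (points whose ``$\mathcal{P}_{m-2}$-component'' is nonzero), together with $\mathcal{P}_1$ minus the two points. This is a single orbit exactly when $m=1$. Hence $3$-transitivity forces $\mathcal{P}_1$ with $\operatorname{PGL}_2$, an action that is sharply $3$-transitive and therefore not $4$-transitive; so no action is $n$-transitive for $n\geqslant 4$, and the classification is complete.

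The main obstacle is the first step. A dimension count alone cannot force $H$ to be parabolic, since reductive groups are large relative to their rank; the argument genuinely needs structure theory — observability of the isotropy group, the fact that a dense-orbit isotropy subgroup of a reductive group produces a homogeneous space of rank one, and the case-by-case check that every rank-one model other than projective space already fails to be $2$-transitive. By contrast, once the reduction to $G/P$ is in place, the remaining steps are bookkeeping over the classified simple types together with an elementary matrix computation.
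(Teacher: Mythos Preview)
The paper does not prove this statement; it is quoted as a fact from Knop's 1983 paper \cite{knop} and serves only as historical motivation in the introduction. There is therefore no ``paper's own proof'' against which to compare your attempt.

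That said, your outline is essentially the standard route: reduce to $X=G/P$ with $P$ parabolic, translate $2$-transitivity via the Bruhat decomposition into the Weyl-group condition $|W_P\backslash W/W_P|=2$, run through the irreducible root systems to isolate $(\operatorname{PGL}_{m+1},\mathcal{P}_m)$, and then handle $3$-transitivity by an explicit orbit computation on $\mathcal{P}_m$. The double-coset step and the $3$-transitivity analysis are correct as written.

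The soft spot is precisely where you flag it. Your appeal to ``observability plus the rank-one classification'' to force $H$ parabolic is a pointer rather than an argument: you neither prove that a $2$-transitive homogeneous space of a reductive group is spherical of rank one, nor actually carry out the promised case check. Knop's own route to completeness of $X$ is shorter and does not go through the spherical classification (which largely postdates his paper); one argues directly that a Borel subgroup of $G$ must fix a point of $X$, whence the isotropy contains a Borel and is parabolic. If you want a self-contained proof, that is the step to make rigorous.
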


A more general notion of transitivity in the class of algebraic actions, introduced by Popov   {\cite{popov}}, {allows} more examples.

\begin{definition} Assume a connected algebraic group $G$ acts  {rationally} on an irreducible  {algebraic} variety $V$.	If the induced action of $G$ on $V^n$ is transitive on an open subset  {of $V^n$}, then the action is called {\em generically} $n$-transitive.
	\end{definition}

 \begin{fact}{\rm \cite{popov}} In the above setting, if the characteristic of the underlying field is $0$, among simple algebraic groups, only those of type $A_n$ have generically  {$4$}-transitive or higher actions.  To be more {accurate}, $A_n$ has the maximal generic transitivity degree $n+2$; $E_6$ has $4$; other types have $2$ or $3$.
 \end{fact}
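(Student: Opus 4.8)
The plan is to compute the maximal generic transitivity degree $\mathrm{gtd}(G)$ of a simple algebraic group $G$ by reducing everything to the successive analysis of generic stabilisers. Since a generically $1$-transitive action is just one with a dense orbit, and generic $n$-transitivity for $n\geqslant 1$ forces a dense orbit on $V$ (project $V^{n}\to V$), we may replace $V$ by a generically isomorphic homogeneous space and assume $V=G/H$. The recursive engine is the elementary observation that the diagonal action of $G$ on $V^{n}$ has a dense orbit if and only if, for a generic $v\in V$, the stabiliser $G_{v}$ acts generically $(n-1)$-transitively on $V$. Iterating yields a descending chain of generic stabilisers $G=G^{(0)}\supseteq G^{(1)}\supseteq\cdots$, and $\mathrm{gtd}(G,V)$ is the largest $n$ for which $G^{(n-1)}$ still has a dense orbit on $V$. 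Running alongside this I would use the crude but robust dimension inequality $\mathrm{gtd}(G,V)\leqslant \dim G/\dim V$, which already confines any high generic transitivity to homogeneous spaces of small dimension and focuses attention on the minimal-dimensional flag varieties $G/P$.

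Next I would make the stabiliser chain explicit for $V=G/P$ with $P=LU$ a maximal parabolic. Generic $2$-transitivity is automatic here: by the Bruhat decomposition $P$ has only finitely many orbits on $G/P$, so its big-cell orbit is dense. Moving one generic point to the base point and a second into the opposite big cell, the two-point stabiliser is conjugate to the Levi factor $L$, and a Luna-type slice identifies the $L$-action near the base point with the isotropy representation of $L$ on the nilradical $\mathfrak{u}=\operatorname{Lie}(U)$. Thus the action is generically $3$-transitive precisely when $L$ has a dense orbit on $\mathfrak{u}$, i.e. when $(L,\mathfrak{u})$ is a prehomogeneous vector space; and the passage from $3$ to $4$, and beyond, is governed by whether the successive generic isotropy subgroups (the stabiliser in $L$ of a generic vector of $\mathfrak{u}$, and so on) continue to act with dense orbits on $V$.

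The heart of the argument is a type-by-type determination of where each chain terminates, powered by the representation theory available in characteristic $0$: Vinberg's theory of $\theta$-groups and graded Lie algebras, the Sato--Kimura classification of prehomogeneous vector spaces, and Elashvili's tables of generic stabilisers. For $A_{n}=\operatorname{SL}_{n+1}$ acting on $\mathbb{P}^{n}$ the Levi is $\operatorname{GL}_{n}$ acting on $\mathfrak{u}\cong k^{n}$ with a dense orbit, and the chain persists to the end: concretely, $\operatorname{PGL}_{n+1}$ is transitive on $(n+2)$-tuples in general position (projective frames) with finite stabiliser, while $(n+3)$ points are excluded by the dimension count, giving exactly $n+2$. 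For $E_{6}$ acting on the $16$-dimensional Cayley plane $E_{6}/P_{1}$ the Levi is $\operatorname{Spin}_{10}\times\mathbb{G}_{m}$ acting on the half-spin representation $\mathfrak{u}\cong k^{16}$, which is prehomogeneous, so the action is generically $3$-transitive; the generic spinor stabiliser still acts with a dense orbit, yielding generic $4$-transitivity, while a fifth point is impossible because $\dim E_{6}=78<5\cdot 16$, so the degree is exactly $\lfloor 78/16\rfloor=4$. For every other simple type one checks that either $L$ fails to have a dense orbit on $\mathfrak{u}$ (the chain stops at $2$) or it has one but the next generic stabiliser does not (the chain stops at $3$); the typical pattern is visible already for $B_{n}=\operatorname{SO}_{2n+1}$ on the quadric, where $\operatorname{SO}_{2n-1}\times\mathbb{G}_{m}$ has a dense orbit on $k^{2n-1}$ but the residual $\operatorname{SO}_{2n-2}$ does not, terminating at $3$.

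The main obstacle, and the bulk of the work, is this last case analysis together with the reduction that lets us ignore non-minimal and non-parabolic homogeneous spaces $V$. The dimension inequality disposes of large $V$, and it is tight for the two record holders (since $n\mid n(n+2)$ and $\lfloor 78/16\rfloor$ is attained), so their upper bounds are immediate and the real content there is achievability. For the classical families $B_{n},C_{n},D_{n}$, however, the bound $\dim G/\dim V$ is too weak near the minimal flag varieties to rule out degree $4$, so for each relevant maximal parabolic one must genuinely verify the dense-orbit conditions for the isotropy representation and its iterated generic stabilisers; this is exactly where the Sato--Kimura and Elashvili data, and the linear reductivity guaranteed by characteristic $0$, are indispensable. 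Assembling the pieces, the dimension bound together with the explicit flag-variety computations shows that $A_{n}$ attains $n+2$, that $E_{6}$ attains $4$, and that no other simple type exceeds $3$, which is the precise assertion.
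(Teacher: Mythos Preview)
The paper does not give a proof of this statement at all: it is recorded as a \emph{Fact} with the bare citation \cite{popov} and nothing more. There is therefore nothing in the paper to compare your argument against.

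Your proposal is, in outline, a faithful sketch of Popov's own method---reduction to a homogeneous space $G/H$, the recursive generic-stabiliser chain, the dimension bound $\mathrm{gtd}(G,V)\leqslant \dim G/\dim V$, the passage to maximal parabolics and the isotropy representation $(L,\mathfrak{u})$, and the case analysis via prehomogeneous vector spaces and Elashvili-type tables. The one point you flag but do not fully justify is the reduction to $V=G/P$ with $P$ a maximal parabolic: the dimension inequality alone does not force $H$ to be parabolic, and Popov's argument needs an additional step to pass from an arbitrary $G/H$ of small dimension to a flag variety (or to rule out the non-parabolic cases directly). Apart from that, your summary is accurate, and the explicit computations you give for $A_n$ and $E_6$ are correct.
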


 The notion of genericity already exists in model theory, more precisely in  {the study of} structures of \fmrd. Recall that when $X$ has \fmrd,  a definable subset $A\subseteq X$ is called {\em generic} (sometimes strongly generic) in $X$, if $\rk(X\setminus A)<\rk(X)$. Using this definition, Borovik and Cherlin \cite{borche} extended the definition of Popov to group actions in the \fmr context.

 \begin{definition} Let $G$ be a group of \fmr acting definably on a set $X$ of \fmrd. If the induced action of $G$ on $X^n$ is (sharply) transitive on a generic subset  {of $X^n$}, then we say $G$ acts {\em generically (sharply) $n$-transitively} on $X$.
 \end{definition}

\begin{examples} \cite{borche}
 For every $m\geqslant 1$ and \acf $F$, the natural action of:
\begin{compactenum}
	\item  $(F^*)^m$ as  {a group of diagonal matrices  on the vector space $F^m$} is generically sharply     $m$-transitive.

	\item  {the general linear group} $\operatorname{GL}_m(F)$ on  {the vector space} $F^m$ is generically sharply $m$-transitive.
	
	\item  {the affine general linear group}  {$\operatorname{AGL}_m(F) = F^m \rtimes \operatorname{GL}_m(F)$} on $F^m$ is generically sharply $(m+1)$-transitive.
	
	\item  {The projective general linear group}  $\operatorname{PGL}_{m+1}(F)$ on  {the projective space} $\mathcal{P}_{m}(F)$ is generically sharply $(m+2)$-transitive.
	
\end{compactenum}
\end{examples}

 Also, in the same paper \cite{borche}, Borovik and Cherlin asked some motivating questions about the concept. We aim to prove the theorem below which answers two questions from their paper in a slightly more general setting.

\begin{theorem} \label{main} Let $G$ be a group of \fmr acting definably, faithfully and generically
	 $m$-transitively on a  {connected} {\em solvable} group $V$ of \fmrd, where $m\geqslant \rk(V)$. Then $V$ is abelian, $m=\rk(V)$,  and, for some \acf $F$,  $G\cong \operatorname{GL}_m(F)$, and the action  {$G$ on $V$} is equivalent to the natural action  {$\operatorname{GL}_m(F)$ on $F^m$}.
\end{theorem}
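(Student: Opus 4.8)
\medskip
\noindent\textbf{Proof strategy.}
Put $n=\rk(V)$. First I would dispose of trivialities: the case $V=1$ is vacuous, so assume $n\geqslant 1$; and I may assume $G$ is connected, since replacing $G$ by $G^{\circ}$ costs nothing --- a $G$-orbit that is generic in $V^{m}$ breaks into finitely many $G^{\circ}$-orbits of the same, hence generic, rank, so $G^{\circ}$ still acts generically $m$-transitively. As always in this setting, $G$ acts on $V$ by group automorphisms, so $G\leqslant\operatorname{Aut}(V)$ and $G$ fixes $1_{V}$. The plan is then to reduce the theorem, in two steps, to the critical case ``$V$ abelian and $m=n$'', which is settled by our earlier work~\cite{bbnotsharp}: there $G\cong\operatorname{GL}_{n}(F)$ for some \acf $F$, the action is the natural action on $V\cong F^{n}$, and $\rk(F)=1$ since $\rk(V)=n$.

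\medskip
\noindent\textbf{Step 1 (abelian case: the degree cannot exceed $n$).}
Assume $V$ abelian and, for contradiction, $m\geqslant n+1$. I would fix a generic $G$-orbit $U\subseteq V^{m}$ and project it onto the first $n+1$ coordinates; additivity of rank over fibres makes the image a generic $G$-orbit $U'\subseteq V^{n+1}$. Slicing $U'$ over its first coordinate, for $v_{0}$ in a generic subset of $V$ the fibre $\{(v_{1},\dots,v_{n})\in V^{n}:(v_{0},v_{1},\dots,v_{n})\in U'\}$ is generic in $V^{n}$ and is acted on transitively by the stabiliser $G_{v_{0}}$ (any $g\in G$ sending one of its points to another must fix $v_{0}$). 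Hence $G_{v_{0}}$, and therefore its identity component $H:=(G_{v_{0}})^{\circ}$, acts generically $n$-transitively on $V$; and $H$ is a connected group of \fmr acting definably and faithfully on the connected abelian group $V$ with $n=\rk(V)$. By~\cite{bbnotsharp}, $H\cong\operatorname{GL}_{n}(F)$ and $V\cong F^{n}$ with the natural action. But $H$ fixes the generic --- hence nonzero --- vector $v_{0}$, whereas $\operatorname{GL}_{n}(F)$ has no nonzero fixed vector in $F^{n}$. This contradiction gives $m=n$, and then~\cite{bbnotsharp} applied to $G$ on $V$ yields the full conclusion when $V$ is abelian.

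\medskip
\noindent\textbf{Step 2 (from solvable to abelian).}
Now suppose $V$ is solvable but not abelian. Then $V'=[V,V]$ is a nontrivial proper definable connected characteristic subgroup, so $V/V'$ is connected abelian with $\rk(V/V')=n-\rk(V')\leqslant n-1$, and $G$ acts on $V/V'$ by automorphisms. The image of a generic $G$-orbit in $V^{m}$ under $V^{m}\to(V/V')^{m}$ is a generic $G$-orbit there (the fibres of this map all have rank $m\cdot\rk(V')$, so fibre-rank additivity applies), so $\overline G:=G/C_{G}(V/V')$ --- a connected group of \fmr --- acts definably, faithfully and generically $m$-transitively on $V/V'$, with $m\geqslant n>\rk(V/V')$. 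Applying Step~1 to $\overline G$ on $V/V'$ would force $m=\rk(V/V')<n\leqslant m$, which is impossible. Hence $V$ is abelian and Step~1 finishes the proof.

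\medskip
\noindent\textbf{The main obstacle.}
Granting~\cite{bbnotsharp}, what remains is light: two fibre-rank computations (generic $m$-transitivity descends to the stabiliser of a generic point, and to $V/V'$) and some bookkeeping to keep all the groups in sight connected. The genuinely hard work is hidden in~\cite{bbnotsharp}: recovering, from bare generic $n$-transitivity on an $n$-dimensional connected abelian group, the \acf $F$ and the $F$-vector space structure that pin $G$ down as $\operatorname{GL}_{n}(F)$. That step needs a field-interpretation argument of Zilber type rather than rank arithmetic, and is the part least amenable to a short self-contained proof.
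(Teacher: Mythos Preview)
Your reduction from solvable to abelian via $[V,V]$ matches the paper's Proposition~4.5, and your stabiliser argument in Step~1 is a valid way to force $m=n$ once the abelian case is known. But the proposal has a real gap: you invoke \cite{bbnotsharp} as if it settles the abelian case outright, and it does not. The main theorem of \cite{bbnotsharp} (Fact~1.5 here) assumes that $V$ is an elementary abelian $p$-group with $p$ an \emph{odd} prime. A general connected abelian $V$ of finite Morley rank need not be of this form: by Macintyre's theorem it splits as (divisible)$\,\oplus\,$(bounded exponent), and even the bounded part could have exponent $2$ or a non-prime exponent. So at the point where you write ``By~\cite{bbnotsharp}, $H\cong\operatorname{GL}_n(F)$\dots'' you are assuming exactly what this paper is written to prove.

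The paper fills this gap with substantial new work that your outline skips entirely: the divisible (characteristic~$0$) case via the Loveys--Wagner linearisation (Lemma~4.2), the elementary abelian $2$-group case via the even-type classification from \cite{abc} together with an expanded Linearisation Theorem (Theorem~3.1 and Proposition~4.3), and then Macintyre's decomposition to glue these with the odd-$p$ result into the general abelian statement (Proposition~4.4). Only after all of that does the $[V,V]$ reduction become available. A secondary point: reducing to $G^\circ$ at the start is fine for proving $G^\circ\cong\operatorname{GL}_m(F)$, but the theorem asserts $G\cong\operatorname{GL}_m(F)$; the paper closes this with Fact~2.10, and you should do likewise rather than declaring the passage to $G^\circ$ costless.
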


As an immediate consequence of the above theorem, we note that  {generic $m$-transitive and generic sharply $m$-transitive group actions coincide in this context.}

\begin{corollary} Any definable, faithful and generically
	$m$-transitive action of a group of \fmr on a  {connected} solvable group $V$ of \fmr at most $m$ is generically {\em sharply}
	$m$-transitive.
	\end{corollary}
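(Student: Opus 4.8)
The plan is to obtain the corollary as a formal consequence of Theorem~\ref{main}. Suppose $G$ is a group of \fmr acting definably, faithfully and generically $m$-transitively on a connected solvable group $V$ of rank at most $m$. By Theorem~\ref{main} there is an \acf $F$ such that $G \cong \operatorname{GL}_m(F)$ and the action of $G$ on $V$ is equivalent to the natural action of $\operatorname{GL}_m(F)$ on $F^m$. So it suffices to verify two things: first, that the natural action of $\operatorname{GL}_m(F)$ on $F^m$ is generically \emph{sharply} $m$-transitive; and second, that generic sharp $m$-transitivity is invariant under equivalence of actions.

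The first point is precisely Example~(2) above, taken from~\cite{borche}: the set $B \subseteq (F^m)^m$ of $m$-tuples that form a basis of $F^m$ is nonempty and Zariski-open, hence generic (its complement has strictly smaller rank), and $\operatorname{GL}_m(F)$ acts on $B$ simply transitively, since an invertible linear map is determined by — and may be prescribed freely on — the images of a fixed basis. The second point is routine: an equivalence of actions consists of a group isomorphism together with a definable bijection of the underlying sets intertwining the two actions, and such data carry the generic orbit in $V^m$ on which the first group acts simply transitively to a generic orbit on which the second group acts simply transitively.

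Combining these observations gives the corollary. There is no genuine obstacle beyond Theorem~\ref{main} itself, which does all the work by supplying the identification of the action; once that identification is available, the passage from generic $m$-transitivity to generic sharp $m$-transitivity is automatic, because the model of the action forced by the theorem — namely $\operatorname{GL}_m(F)$ acting on $F^m$ — is already sharp on the relevant generic set.
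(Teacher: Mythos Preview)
Your argument is correct and matches the paper's own treatment: the corollary is stated there as an immediate consequence of Theorem~\ref{main}, with no further proof given, and you have simply spelled out the evident details (that the identified model $\operatorname{GL}_m(F)\curvearrowright F^m$ is generically sharply $m$-transitive, as in Example~(2), and that this property transfers under equivalence).
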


{In \cite{bbgeneric,bbpseudo,bbsharp,bbnotsharp}, we} worked on this problem and obtained a series of results, that culminated in the following  weaker version of Theorem~\ref{main}, where we assumed that $V$ is an elementary abelian $p$-group, and $p$ is an odd prime.

\begin{fact} {\rm \cite[Theorem 1]{bbnotsharp}}
		Let $G$ be a group of \fmrd, $V$  {a connected} elementary abelian $p$-group of Morley rank $n$, and $p$ an odd prime. Assume that $G$ acts on $V$ faithfully, definably and generically $m$-transitively with $m \geqslant n$. Then $m=n$ and there is an \acf $F$ such that $V\cong F^m$, $G\simeq\operatorname{GL}_m(F)$, and the action is the natural action.
		 \label{elementaryabelianV}
\end{fact}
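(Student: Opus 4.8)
The plan is to argue by induction on $n=\rk(V)$, carrying along the consequence that, in rank below $n$, the maximal generic transitivity degree of any such action equals the rank. (This is immediate from the conclusion, since $\operatorname{GL}_r(F)$ acting naturally on $F^r$ is generically $r$-transitive but, for rank reasons, not generically $(r+1)$-transitive: the latter would force $\rk\operatorname{GL}_r(F)=r^2\geqslant (r+1)r$.) First I would dispose of routine reductions. Passing to $G^{\circ}$ is harmless: a generic $G$-orbit on $V^{m}$ breaks into finitely many $G^{\circ}$-orbits, one of which is still generic, so $G^{\circ}$ acts faithfully and generically $m$-transitively; hence I may assume $G$ connected. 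The base case $n=1$ needs no separate treatment, because a rank-one $V$ is automatically $G$-minimal and the field argument below applies to it verbatim.

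The heart of the induction is to show that $V$ is $G$-minimal, i.e.\ has no proper nonzero connected definable $G$-invariant subgroup. Suppose $W$ were such a subgroup, with $0<\rk(W)<n$. The quotient map $\pi\colon V\to V/W$ is definable and $G$-equivariant, and a rank computation on its fibres (each isomorphic to $W^{m}$) shows that the image under $\pi^{m}$ of the generic $G$-orbit in $V^{m}$ is generic in $(V/W)^{m}$. Thus $\bar G=G/C_{G}(V/W)$ acts faithfully, definably and generically $m$-transitively on the connected elementary abelian $p$-group $V/W$, whose rank is $n-\rk(W)<n$. Since $m\geqslant n>n-\rk(W)$, this action is in particular generically $\big((n-\rk(W))+1\big)$-transitive, contradicting the inductive hypothesis that the maximal degree in rank $n-\rk(W)$ equals $n-\rk(W)$. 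Hence $V$ is $G$-minimal.

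Once $V$ is $G$-minimal I would interpret a field. A Schur-type argument (every nonzero definable $G$-endomorphism of $V$ has connected $G$-invariant kernel and image, hence by minimality is an isogeny) together with Zilber's Field Theorem and the \fmr field results of Macintyre and Cherlin yields an \acf $F$, necessarily of rank one, with $V$ an $F$-vector space of dimension $\dim_{F}V=\rk(V)=n$; since $F$ centralises the $G$-action, $G\leqslant\operatorname{GL}_{F}(V)\cong\operatorname{GL}_{n}(F)$. Now $\operatorname{GL}_{n}(F)\supseteq G$ is itself generically $m$-transitive, and its natural action has maximal degree $n$, so $m\leqslant n$; with $m\geqslant n$ this gives $m=n$. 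Finally generic $n$-transitivity forces $\rk(G)\geqslant n\cdot\rk(V)=n^{2}=\rk\operatorname{GL}_{n}(F)$, so the connected subgroup $G$ has full rank in the connected group $\operatorname{GL}_{n}(F)$ and therefore equals it, completing the induction.

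The main obstacle is the passage in the third paragraph from $G$-minimality to a genuine \acf acting in the standard way, rather than merely an isogeny ring or a module carrying extra invariant structure. This is exactly where the hypothesis that $p$ is odd does its work: it is used to control $2$-torsion and involutions, to exclude the pathological endomorphisms and invariant bilinear forms that obstruct the clean identification $\operatorname{End}_{G}(V)=F$ in characteristic $2$, and to apply the \fmr field-identification theorems in the form required. By contrast, the surrounding steps — the reduction to connected $G$, the push-forward of genericity through $\pi^{m}$, and the closing rank count — are comparatively routine; the delicate point is that a $G$-minimal module for so highly transitive a $G$ is forced to be the standard $F$-linear one.
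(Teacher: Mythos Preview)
This statement is not proved in the present paper; it is quoted as a fact from \cite{bbnotsharp}. So there is no ``paper's own proof'' here to compare against directly. That said, the characteristic~$2$ analogue (Proposition~\ref{characteristic2}) \emph{is} proved in this paper, and its argument gives a reliable template for what the odd-$p$ proof in \cite{bbnotsharp} actually involves---and it is substantially heavier than your sketch.

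Your outer scaffolding is fine: the reduction to connected $G$, the induction on $\rk(V)$, the push-forward of the generic orbit along $\pi^{m}$ to establish $G$-minimality, and the closing rank count $\rk(G)\geqslant n^{2}=\rk\operatorname{GL}_n(F)$ forcing equality. These match the shape of the paper's characteristic-$2$ argument.

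The genuine gap is exactly where you flag it, but it is more serious than you suggest. From $G$-minimality alone, a Schur-type argument gives only that the definable endomorphism ring $\operatorname{End}_G(V)$ is a division ring; for a connected elementary abelian $p$-group this ring may well be the finite field $\mathbb{F}_p$, and then Zilber's Field Theorem has nothing infinite to bite on. You have not produced the infinite abelian group of $G$-equivariant automorphisms that Zilber requires. In the paper's characteristic-$2$ proof this field does \emph{not} come from Schur/Zilber at all: it comes from deep structure theory (Fact~\ref{centralproduct}, the even-type classification) to exhibit a simple algebraic normal subgroup of $G$, followed by the Linearisation Theorem (Theorem~\ref{cor-linearisation}) to impose the $F$-linear structure on $V$. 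The odd-$p$ case in \cite{bbnotsharp} uses analogous machinery. Your proposal, as written, does not supply a mechanism for producing the field, and the parenthetical remark that ``$p$ odd'' handles involutions and bilinear forms does not address this: the issue is not pathology in characteristic~$2$ but the absence of any argument, in either characteristic, that $\operatorname{End}_G(V)$ is infinite.
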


{The above theorem will be used in proving the main theorem of this paper. Since our main theorem assumes that $V$ is solvable,}
one wonders
what happens when $V$ is not solvable. Combining the following result of Alt\i nel and Wiscons with an observation from \cite{bbsharp}, we deduce a partial answer.

\begin{fact} {\rm \cite{aw2022}} \label{awalt} If\/ $V$ is a connected nonsolvable group of \fmr with no involutions on which ${\rm Alt}(m)$ acts definably and faithfully, then\/ $\rk(V)\geqslant m$.
\end{fact}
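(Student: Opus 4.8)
The plan is to induct on $m$, the only structural fact about $V$ needed being that it is $2$-torsion-free --- a connected group of \fmr with no involutions has trivial Sylow $2$-subgroup --- so that every involution $\tau\in{\rm Alt}(m)$ acts on $V$ as a definable automorphism with definable fixed subgroup $C_V(\tau)$, and the rank identity
$\rk(V)=\rk(C_V(\tau))+\rk(I(\tau))$ holds, where $I(\tau)=\{v^{-1}v^{\tau}:v\in V\}$ (the fibres of the map $v\mapsto v^{-1}v^{\tau}$ are cosets of $C_V(\tau)$, so this is purely a counting statement and needs no subgroup structure on $I(\tau)$). The base cases are easy: a connected nonsolvable group of \fmr has rank at least $3$, which covers $m\leqslant 3$, and $m=4$ is handled by inspecting the Klein four-subgroup of ${\rm Alt}(4)$ and its three involutions through the rank identity.

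For $m\geqslant 5$ the group ${\rm Alt}(m)$ is simple, so its action on any infinite definable ${\rm Alt}(m)$-invariant section of $V$ is either trivial or faithful. Fix a suitably large elementary abelian $2$-subgroup $E\leqslant{\rm Alt}(m)$ built from double transpositions; peeling off its generators one at a time and iterating the rank identity expresses $\rk(V)$ as $\rk(C_V(E)^{\circ})$ plus a sum of ranks of the ``difference sets'' $I_i$ cut out by the successive involutions, and $C_V(E)^{\circ}$ carries a definable action of the alternating group on the points moved by nothing in $E$. If $C_V(E)^{\circ}$ is nonsolvable, the inductive hypothesis applied to that smaller alternating group gives a lower bound on $\rk(C_V(E)^{\circ})$, which together with the $I_i$'s should yield $\rk(V)\geqslant m$; the point that makes this bookkeeping go through is to balance the size of $E$ against the number of residual fixed points, and to choose the peeling order so that the ``fixed'' and ``moved'' ranks are distributed efficiently over the $m$ coordinates.

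The genuinely hard case is when $C_V(E)^{\circ}$ --- indeed $C_V(\tau)^{\circ}$ for every involutory automorphism $\tau$ from ${\rm Alt}(m)$ --- is solvable: the bad-group-like situation, where there is no recursion on centralisers and, without the Algebraicity Conjecture, no external structure theory to fall back on. The argument must then be purely rank-arithmetical: one has to show that the sum of the ranks of the difference sets $I_i$ is already at least $m$, that is, that the fixed part of $V$ under a large elementary abelian $2$-subgroup of ${\rm Alt}(m)$ cannot absorb a non-negligible share of $\rk(V)$. I expect this last estimate to be the technical heart of the proof, and I would attack it by running the $2$-group count above together with a parallel count over an elementary abelian $3$-subgroup of ${\rm Alt}(m)$ --- using the nilpotence theorem for fixed-point-free automorphisms of prime order to control the pieces on which an order-$3$ element acts freely --- and then playing the two decompositions off against one another to squeeze out the bound $\rk(V)\geqslant m$.
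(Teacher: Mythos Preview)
The paper does not prove this statement: it is recorded as a \emph{Fact} with an external citation to Alt\i nel--Wiscons \cite{aw2022}, and no argument is given or sketched. So there is no ``paper's own proof'' to compare your proposal against; the authors simply import the result.

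As to the proposal itself: what you have written is a plan, not a proof, and you say so honestly (``I expect this last estimate to be the technical heart\dots I would attack it by\dots''). The ingredients you list --- the rank identity $\rk(V)=\rk(C_V(\tau))+\rk(I(\tau))$ for involutory automorphisms of a $2^\perp$ group, induction on $m$ via point stabilisers in ${\rm Alt}(m)$, peeling off involutions from an elementary abelian $2$-subgroup, and a parallel count over $3$-elements invoking the nilpotence of fixed-point-free targets --- are indeed the right toolkit and match the spirit of the Alt\i nel--Wiscons argument. But the passage where you write ``the bookkeeping goes through'' and ``balance the size of $E$ against the number of residual fixed points'' is precisely where the actual work lies, and you have not done it. In particular, the ``bad-group-like'' case where every $C_V(\tau)^\circ$ is solvable is not something one resolves by a single clever $2$-versus-$3$ comparison; the published proof requires a considerably more delicate recursive analysis of how ranks distribute across the various centralisers and commutator sets as one varies over the support structure of ${\rm Alt}(m)$. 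Your outline is a reasonable first approximation to the strategy, but it is not yet a proof and should not be presented as one.
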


\begin{corollary} Let $G$ be a connected group of \fmr acting definably, faithfully and generically sharply
	$m$-transitively on a  {connected} non-solvable group $V$ of \fmr with no involutions, where $m\geqslant \rk(V)$. Then $m=\rk(V)$.
\end{corollary}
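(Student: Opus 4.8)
The plan is to deduce the statement from Fact~\ref{awalt} by exhibiting, inside $G$, a definable subgroup isomorphic to ${\rm Sym}(m)$; the construction of this symmetric group from the generic action is essentially the observation from \cite{bbsharp} alluded to above. First, the small cases can be cleared away: by Reineke's theorem a connected nonsolvable group of \fmr has Morley rank at least $2$, so $V$ is nontrivial with $\rk(V)\geqslant 2$; hence $m\leqslant 1$ is impossible, and when $m=2$ the hypotheses force $2=m\geqslant\rk(V)\geqslant 2$, giving the conclusion. So assume $m\geqslant 3$.

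Let $\Omega\subseteq V^m$ be a generic subset on which $G$ acts sharply transitively. Each coordinate permutation $\pi\in{\rm Sym}(m)$ is a rank-preserving definable bijection of $V^m$, so $\pi(\Omega)$ is again generic; since ${\rm Sym}(m)$ is finite, $\Omega^\ast:=\bigcap_{\pi\in{\rm Sym}(m)}\pi(\Omega)$ is generic, and so is its intersection with the generic set of tuples having pairwise distinct coordinates. Fix $\bar a=(a_1,\dots,a_m)$ in this intersection, and put $A=\{a_1,\dots,a_m\}$, a set of size $m$. I would then show that the setwise stabiliser $H:=\{g\in G: gA=A\}$ is a definable subgroup of $G$ isomorphic to ${\rm Sym}(m)$: it is definable as a Boolean combination of the conditions $ga_i=a_j$; the natural homomorphism $H\to{\rm Sym}(A)\cong{\rm Sym}(m)$ has trivial kernel, since that kernel is the pointwise stabiliser of $\bar a$ in $G$ and $\bar a\in\Omega^\ast\subseteq\Omega$ where $G$ acts sharply; and it is surjective because every tuple obtained from $\bar a$ by permuting coordinates lies in $\Omega$ (as $\bar a\in\Omega^\ast$), so each permutation of $A$ is realised by the unique element of $G$ carrying $\bar a$ to the corresponding permuted tuple.

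Restricting the $G$-action to ${\rm Alt}(m)\leqslant H\leqslant G$ then gives an action of ${\rm Alt}(m)$ on $V$ which is definable (a finite group acting via elements of $G$, and $G$ acts definably) and faithful (since the $G$-action is faithful). As $V$ is a connected nonsolvable group of \fmr with no involutions, Fact~\ref{awalt} yields $\rk(V)\geqslant m$, and together with the hypothesis $m\geqslant\rk(V)$ this gives $m=\rk(V)$, as desired. The one genuinely substantive step is the surjectivity of $H\to{\rm Sym}(m)$: this is where the genericity bookkeeping over all coordinate permutations is needed, and it is precisely the observation imported from \cite{bbsharp}. Everything else --- definability of $H$, faithfulness of the restricted action, and the concluding inequality --- is immediate or quoted.
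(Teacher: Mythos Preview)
Your proof is correct and follows essentially the same strategy as the paper's: build a copy of ${\rm Sym}(m)$ inside $G$ from the generically sharply transitive action (the paper invokes \cite[Lemma~3.1]{bbsharp}, which produces the slightly larger group ${\rm Sym}(m)\ltimes(\mathbb{Z}_2)^m$, but only the ${\rm Sym}(m)$ part is needed), then apply Fact~\ref{awalt} to the ${\rm Alt}(m)$ subgroup. Your explicit construction of ${\rm Sym}(m)$ via the ${\rm Sym}(m)$-saturation $\Omega^\ast$ of the generic orbit is exactly the mechanism underlying that lemma.
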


\begin{proof}  Proceeding as in the proof of Lemma 3.1 in \cite{bbsharp} we obtain a subgroup isomorphic to ${\rm Sym}(m)\ltimes (\mathbb Z_2)^m$ in $G$. Now Fact~\ref{awalt} applies. \end{proof}

As a final step, we apply our theorem to definably primitive groups of \fmr and affine type. All necessary definitions and proofs are given in Section~\ref{primitive}.

\begin{corollary} Assume that $G$ acts on $X$ definably primitively, and generically $(n+1)$-transitively. If the action is of affine type and $\rk(X)\leqslant n$, then $\rk(X)=n$, and $G\curvearrowright X$ is equivalent to the natural action of the affine group $\operatorname{AGL}_n(K) \curvearrowright {\mathbb A}_n(K)$ for some \acf $K$.
\end{corollary}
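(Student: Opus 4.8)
The plan is to reduce the statement to Theorem~\ref{main} by exploiting the rigid structure that the hypothesis ``affine type'' imposes on $G$. Recall from Section~\ref{primitive} that, $G$ being definably primitive of affine type, its socle $V$ is a connected, definable, abelian normal subgroup which is self-centralising ($C_G(V)=V$) and acts regularly on $X$. Fix a base point $x_0\in X$ and put $G_0=G_{x_0}$. Regularity of $V$ gives $V\cap G_0=1$ and $VG_0=G$, hence $G=V\rtimes G_0$. Since $V$ is connected and normal in the connected group $G$, the quotient $G/V\cong G_0$ is connected, so $G_0$ is itself a connected group of \fmrd; moreover $G_0$ acts faithfully on $V$, its kernel lying in $C_G(V)\cap G_0=V\cap G_0=1$. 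Finally, identifying $X$ with $V$ through $v\mapsto v\cdot x_0$, the subgroup $V$ acts on $V$ by left translation while $G_0$ acts by conjugation, so $G\curvearrowright X$ is equivalent to the affine action of $V\rtimes G_0$ on $V$.

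Next I would transfer generic multiple transitivity from $G\curvearrowright X$ down to $G_0\curvearrowright V$. Let $U\subseteq X^{n+1}$ be a generic subset on which $G$ acts transitively; being a single $G$-orbit, $U$ is $V$-invariant. Using regularity of $V$, the map sending $(a_0,a_1,\dots,a_n)\in X^{n+1}$ to $(w;v_1,\dots,v_n)\in V\times V^{n}$, where $w$ is determined by $w\cdot x_0=a_0$ and each $v_i$ by $(wv_i)\cdot x_0=a_i$, is a definable bijection under which $V$ acts by left translation on the first coordinate alone. Consequently the $V$-invariant set $U$ corresponds to $V\times U_0$ for some $U_0\subseteq V^{n}$, and genericity of $U$ in $X^{n+1}\cong V\times V^{n}$ immediately gives genericity of $U_0$ in $V^{n}$. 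If $g\in G$ carries one element of $U$ whose first coordinate is $x_0$ to another such element, then $g$ fixes $x_0$, so $g\in G_0$; thus $G_0$ acts transitively on the generic set $U_0$, that is, generically $n$-transitively on $V$. As $\rk V=\rk X\leqslant n$ and $V$, being abelian, is solvable, all hypotheses of Theorem~\ref{main} with $m=n$ hold for $G_0\curvearrowright V$.

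Applying Theorem~\ref{main} yields $\rk V=n$, an \acf $F$, an isomorphism $G_0\cong\operatorname{GL}_n(F)$, and an equivalence of $G_0\curvearrowright V$ with the natural action $\operatorname{GL}_n(F)\curvearrowright F^{n}$. Transporting the decomposition $G=V\rtimes G_0$ across these isomorphisms produces $G\cong F^{n}\rtimes\operatorname{GL}_n(F)=\operatorname{AGL}_n(F)$ compatibly with the identification $V\cong F^{n}$; together with the last sentence of the first paragraph this shows that $G\curvearrowright X$ is equivalent to $\operatorname{AGL}_n(F)\curvearrowright{\mathbb A}_n(F)$. Taking $K=F$, and recalling $\rk X=\rk V=n$, completes the proof.

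The real content is entirely in Theorem~\ref{main}; the only genuine work internal to this corollary is the descent in the second paragraph, which is a routine rank computation. The point I would monitor most carefully is whether the reduction to $G_0\curvearrowright V$ simultaneously preserves every hypothesis of Theorem~\ref{main} — faithfulness, connectedness of the acting group, connectedness and solvability of $V$, the rank inequality, and genericity of the restricted orbit — since each of these rests on the precise meaning of ``affine type'' and on $V$ acting regularly, so that is where an oversight would most easily slip in.
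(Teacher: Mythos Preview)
Your proof is correct and follows essentially the same route as the paper's: identify $X$ with the regular abelian normal subgroup, observe that the point stabiliser acts generically $n$-transitively on it, and invoke Theorem~\ref{main}. One minor slip: you assert that $G$ is connected in order to deduce that $G_0$ is connected, but connectedness of $G$ is not among the hypotheses---fortunately this is harmless, since Theorem~\ref{main} does not require the acting group to be connected, so you can simply drop that step.
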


\section{Useful Facts}

{In this section, we list references for the facts we will need in our proof.}

\begin{fact} {\rm \cite[Lemma 3.1]{bbpseudo}} If\/ $G\ltimes V$ is a connected group, in which $G$ and $V$ are definable subgroups, $V$ is strongly minimal and $C_G(V)=1$, then there exists an \acf $F$ such that $V\cong F^+$, $G\cong F^*$, and the action is the usual multiplication. \label{strmin}
\end{fact}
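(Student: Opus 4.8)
The plan is to reduce the statement to Zilber's Field Theorem. Write $\mathcal{G} = G \ltimes V$. First I would record the structural consequences of the hypotheses. The connected component $V^{\circ}$ is characteristic in $V$, hence normal in $\mathcal{G}$, and it has finite index there; since $\mathcal{G}$ is connected this forces $V = V^{\circ}$, so $V$ is connected. A connected group of Morley rank $1$ is abelian (Reineke's theorem), so $V$ is abelian, and by strong minimality $V$ has no proper infinite definable subgroup. Similarly $G \cong \mathcal{G}/V$ is connected, being a quotient of the connected group $\mathcal{G}$; and since the conclusion forces $G \neq 1$, in fact $G$ is infinite, for a finite $G$ would make $V$ a proper definable subgroup of finite index $|G|$ in the connected group $\mathcal{G}$.

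Consequently $V$ is a $G$-minimal abelian group of \fmrd: any proper nontrivial definable connected $G$-invariant subgroup would violate the absence of proper infinite definable subgroups. I would then apply Zilber's Field Theorem to this $G$-minimal group: there is a field $F$, interpretable in $\mathcal{G}$, which endows $V$ with the structure of an $F$-vector space on which $G$ acts faithfully and $F$-linearly. Since $\rk(V) = 1$ and the scalar field is infinite, $\rk(F) = 1$ and $\dim_{F} V = 1$; hence $V \cong F^{+}$, and $G$ embeds into $\operatorname{GL}_{1}(F) = F^{*}$ acting by scalar multiplication. A field of \fmr is algebraically closed (Macintyre's theorem), so $F$ is an \acfd; and $G$, being a subgroup of $F^{*}$, is abelian.

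It remains to show $G = F^{*}$. The definable set $F$ has Morley rank $1$, and since $F^{+} \cong V$ is connected it has degree $1$; hence $F$ is strongly minimal, so every definable subset of $F$ is finite or cofinite. But a proper infinite subgroup of $F^{*}$ would be an infinite, co-infinite definable subset of $F$, contradicting the strong minimality of $F$; therefore the infinite definable subgroup $G \leqslant F^{*}$ is all of $F^{*}$. Thus $G \cong F^{*} = \operatorname{GL}_{1}(F)$ and the action of $G$ on $V$ is the natural action of $\operatorname{GL}_{1}(F)$ on $F^{+}$, as claimed.

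The substantive points are two. The main obstacle is the application of Zilber's Field Theorem: one must make sure that the version invoked does not tacitly require $G$ to be abelian or $\mathcal{G}$ to be solvable — this is handled either by using the form that extracts the $F$-linear structure directly from $G$-minimality, so that abelianity of $G$ becomes an output, or by first proving that $G$ acts transitively on $V \setminus \{0\}$, equivalently that $C_{V}(G) = 0$, and then reading off $\rk(G) = \rk(V \setminus \{0\}) + \rk(G_{e}) = 1$, whence $G$ is abelian by Reineke. The second point is the final step: the passage from an embedding $G \hookrightarrow F^{*}$ to the equality $G = F^{*}$ genuinely uses the strong minimality of $V$, since it is exactly what excludes the ``bad field'' configurations in which a connected group of \fmr acts faithfully by scalar multiplication through a proper infinite definable subgroup of a multiplicative group.
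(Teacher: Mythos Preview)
Your approach is the same as the paper's in spirit---reduce to Zilber's theorem---and most of the argument is fine, but there is a genuine gap at the point you yourself flag. The standard Zilber Field Theorem (e.g.\ \cite[Theorem~9.1]{bn}) takes as input an \emph{abelian} group acting faithfully and minimally on an abelian group; you apply it to $G$ without having established that $G$ is abelian. Your two proposed fixes do not close this. The first (``a form that extracts the $F$-linear structure directly from $G$-minimality'') is not a theorem in the standard references; one cannot read off linearity over an interpretable field from $G$-minimality alone without some solvability or abelianity hypothesis on $G$, or without passing through the Hrushovski analysis. The second fix conflates two things: the equivalence ``$G$ transitive on $V\setminus\{0\}$ $\Leftrightarrow$ $C_V(G)=0$'' is correct once one does the orbit analysis, but neither side is proved, and more seriously the displayed computation $\rk(G)=\rk(V\setminus\{0\})+\rk(G_e)=1$ silently assumes $\rk(G_e)=0$. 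In the torsion-free case this is easy ($\mathbb{Q}e\leqslant C_V(G_e)$ forces $G_e=1$), but when $V$ is an elementary abelian $p$-group the finiteness of point stabilisers is exactly the nontrivial content, and you have not supplied it.

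The paper's one-line proof routes through \cite[Proposition~3.12]{poizat}, which is the Hrushovski-style classification of connected groups acting faithfully and transitively on a strongly minimal set: such a group has rank at most $3$, and is either regular abelian, the affine group $F^+\rtimes F^*$, or $\mathrm{PSL}_2(F)$. Applying this to $\mathcal{G}=G\ltimes V$ acting on $V$, the simple case is excluded by the normal subgroup $V$, the rank-$1$ case by $G\ne 1$, and one lands directly in the affine case. This is what actually bounds $\rk(G)$ and delivers abelianity of $G$; once you insert that step (or prove $\rk(G_e)=0$ by an equivalent argument), your proof is complete. Your final paragraph on strong minimality of $F$ forcing $G=F^*$ is correct and well observed.
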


\begin{proof} This is a special case of  \cite[Proposition 3.12]{poizat}, which eventually follows from Zilber's theorem,  {see \cite[Theorem 9.1]{bn}}.
\end{proof}

The structure of connected group of Morley rank 1 is well known, as the following fact shows. Recall that a torus (that is, a  {definable} divisible abelian group) is called decent, if it is the definable hull of its torsion
part.

\begin{fact} {\rm \cite[Fact 2.3]{bbsharp}} If $G$ is a connected group of Morley rank $1$, then it is abelian. Moreover, one of the following holds: $G$ is an elementary abelian $p$-group,  or torsion-free and divisible, or a decent torus. \label{rank1groups}
\end{fact}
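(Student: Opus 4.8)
The plan is to prove the two halves separately: first that a connected group $G$ of Morley rank $1$ is abelian --- this is Reineke's theorem --- and then to read off the three shapes from the structure theory of abelian groups of finite Morley rank. In both parts the only group-theoretic input beyond definability is that a definable subgroup of $G$ has rank $0$ or $1$, and in the latter case has finite index, hence by connectedness equals $G$; thus every proper definable subgroup of $G$ is finite.

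For abelianity I would argue by contradiction. If $G$ is non-abelian, then $Z(G)$ is a proper definable subgroup and hence finite; picking $a\in G\setminus Z(G)$, the centraliser $C_G(a)$ is likewise proper, so finite, and therefore the conjugacy class $a^G$ has rank $1$ and is generic. Since $G$ is connected, $\ddeg G=1$, so $G\setminus a^G$ is finite; the same holds for the class of any non-central element, and as two cofinite subsets of the infinite set $G$ must meet while distinct conjugacy classes are disjoint, it follows that all non-central elements of $G$ are conjugate and that $G=Z(G)\sqcup a^G$. Since $\langle a\rangle\leqslant C_G(a)$ is finite, $a$ --- and hence every element of $G$ --- has finite order, so $G$ is a torsion group in which all non-central elements share a single order. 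From this point one is essentially reduced to excluding a connected rank-$1$ torsion group of prime exponent with finite centre (the nilpotent case collapsing because a connected quotient $G/Z(G)$ cannot carry a non-degenerate commutator pairing into the finite group $G'$), and for the clean form of this last step I would invoke Reineke's theorem, \cite{bn}.

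Granting that $G$ is abelian and connected of rank $1$, the classification is elementary. For each prime $p$ the sets $G[p]=\{g\in G:g^p=1\}$ and $pG=\{g^p:g\in G\}$ are definable subgroups. If some $G[p]$ is infinite, it has rank $1$, hence finite index, hence equals $G$, so $G$ is an elementary abelian $p$-group. Otherwise every $G[p]$ is finite; then multiplication by $p$ has finite kernel, so $pG$ has rank $1$ and finite index, whence $pG=G$: thus $G$ is $p$-divisible for every $p$, that is, divisible. If $G$ is torsion-free we are in the second case. If $G$ has a non-trivial torsion element, then --- replacing it by a suitable power --- $G[p]\neq 1$ for some $p$, and divisibility lets one build a copy of the Pr\"ufer group $\mathbb{Z}(p^\infty)$ inside $G$; its definable hull is an infinite definable subgroup, hence of finite index, hence all of $G$, and since $\mathbb{Z}(p^\infty)$ is a torsion group this displays $G$ as the definable hull of its torsion subgroup --- a decent torus. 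The three outcomes are pairwise incompatible (bounded exponent versus divisible, torsion-free versus torsion), and the argument shows they are exhaustive.

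The real obstacle is the abelianity step: everything beyond ``$G$ is torsion with all non-central elements of one order'' is the genuine content of Reineke's theorem, whose complete proof uses the finite-Morley-rank machinery (no connected definably simple group of rank $1$, control of groups of prime exponent), so in practice I would cite it rather than reconstruct it. The reduction from genericity of $a^G$ to the torsion statement, and the whole abelian classification, are by comparison routine.
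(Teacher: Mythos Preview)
The paper does not prove this statement: it is recorded as a Fact with a bare citation to \cite[Fact 2.3]{bbsharp}, and no argument is supplied. So there is no proof in the paper against which to compare your proposal.

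That said, your argument is correct and is essentially the standard one. The abelianity step is Reineke's theorem, and you rightly flag that the full exclusion of a non-abelian rank-$1$ group needs its machinery; what you sketch up to the torsion reduction is the usual opening of that proof. The trichotomy in the abelian case is likewise the textbook argument. The only point worth a remark is the decent-torus step: after building a Pr\"ufer subgroup $P$ you conclude $d(P)=G$, but the definition requires $G$ to be the definable hull of its \emph{full} torsion part $T$; since $P\leqslant T$ one has $G=d(P)\leqslant d(T)\leqslant G$, so this follows at once. Nothing is missing.
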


\begin{fact} {\rm \cite[Fact 2.4]{bbsharp}} A decent torus does not admit a non-trivial connected definable automorphism 	group. \label{torusautom}\end{fact}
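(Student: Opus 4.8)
The plan is to show that any such automorphism group acts \emph{trivially} on the torus, and hence — being by definition a faithful group of automorphisms — must be trivial. So let $T$ be a decent torus and let $H$ be a connected definable group of definable automorphisms of $T$; I want to prove $H = 1$.

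First I would exploit the torsion layers. For each $n \geqslant 1$ put $T[n] = \{\,t \in T : t^n = 1\,\}$. Since $T$ is a divisible abelian group of \fmr, it is a direct sum of finitely many Pr\"ufer groups and a torsion-free divisible group, so each $T[n]$ is \emph{finite}. Moreover $T[n]$ is characteristic in $T$, so $H$ acts definably on the finite set $T[n]$; the kernel of this action is a definable subgroup of finite index in $H$, and as $H$ is connected this kernel is all of $H$. Hence every $h \in H$ fixes $T[n]$ pointwise, for every $n$; equivalently, every $h \in H$ fixes the whole torsion subgroup $T_0 = \bigcup_n T[n]$ pointwise.

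Next I would pass from the torsion to all of $T$ using decency. Fix $h \in H$. Its fixed-point set $C_T(h) = \{\,t \in T : h(t) = t\,\}$ is a definable subgroup of $T$, and by the previous step it contains $T_0$. Since $T$ is a \emph{decent} torus we have $T = d(T_0)$, the definable hull of $T_0$; a definable subgroup containing $T_0$ must contain $d(T_0)$, so $C_T(h) = T$, i.e.\ $h = \mathrm{id}_T$. As $h$ was arbitrary, $H$ acts trivially on $T$, and faithfulness of the action forces $H = 1$.

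The argument is short, and the only steps needing care are the two structural inputs: the finiteness of each torsion layer $T[n]$ in a divisible abelian group of \fmr, and the definability of the induced $H$-action on each $T[n]$ (required to invoke ``a connected group acting on a finite set acts trivially''). Neither is a genuine obstacle — both are standard — and the decency hypothesis enters exactly once, precisely to bridge ``fixes every torsion element'' and ``fixes everything.'' If one prefers to avoid citing the structure theory of divisible abelian groups of \fmr, the same conclusion for the layers $T[n]$ can be extracted from the fact that $T$ has only finitely many torsion elements of each fixed finite order; either way the skeleton of the proof is unchanged.
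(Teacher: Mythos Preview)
Your proof is correct and is the standard argument: use connectedness of $H$ to force triviality on each finite torsion layer $T[n]$, then use decency to pass from the torsion subgroup to all of $T$ via the definable hull. The paper itself does not supply a proof of this fact; it merely cites it from \cite[Fact 2.4]{bbsharp}, so there is no in-paper argument to compare against. Your write-up would serve perfectly well as a self-contained justification in place of the bare citation.
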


\begin{fact} {\rm \cite[Proposition 19.2]{humphreys}} A connected nilpotent  {linear} algebraic group  {over an algebraically closed field} can be written as $G_u\times G_s$ where $G_u$ and $G_s$ are  unipotent and semisimple parts of the group, respectively. \label{humphreys-nilpotent} \end{fact}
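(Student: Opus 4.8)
Since this statement is precisely \cite[Proposition~19.2]{humphreys}, the plan is to recall the classical argument for the structure of connected nilpotent linear algebraic groups. Fix an embedding $G\leqslant\operatorname{GL}_n(F)$, let $G_u$ and $G_s$ denote the sets of unipotent and of semisimple elements of $G$, and let $T$ be a maximal torus of $G$. The crux is to show that $T$ is central in $G$; granting this, $G_s=T$ and the rest is bookkeeping with the Jordan decomposition.

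To prove $T$ central I would induct on $\dim G$, the case $\dim G=0$ being vacuous. If $\dim G>0$ then, $G$ being connected and nilpotent, its centre has positive dimension, so $Z(G)^{\circ}$ is a nontrivial connected commutative group; writing it as the product of its maximal torus and its unipotent part yields a nontrivial connected \emph{central} subgroup $S$ that is either a torus or unipotent. In either case $TS=T\times R$ for a (possibly trivial) complement $R$, so $T$ is the unique, hence characteristic, maximal torus of $TS$. Passing to $\overline{G}=G/S$, which is connected nilpotent of strictly smaller dimension, the induction hypothesis makes the image of $T$ central in $\overline{G}$; hence $[TS,G]\leqslant S$, so $G$ normalises $TS$, therefore normalises its characteristic maximal torus $T$, and by rigidity of tori — the algebraic analogue of Fact~\ref{torusautom}, namely that a connected group normalising a torus centralises it — $G$ centralises $T$.

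With $T$ central, $G/T$ is a connected solvable group whose maximal torus is trivial, hence $G/T$ is unipotent. For $g\in G$ with Jordan decomposition $g=g_sg_u$ (with $g_s,g_u\in G$ commuting), the image of $g_s$ in $G/T$ is both semisimple and unipotent, hence trivial, so $g_s\in T$; thus $G_s=T$ and $G=T\cdot G_u$. A semisimple unipotent element is trivial, so $T\cap G_u=1$, and $G_u$ is a closed connected normal subgroup (the standard semidirect decomposition $G=T\ltimes G_u$ of a connected solvable group). Since $T$ is central this semidirect product is direct, so the multiplication map $G_s\times G_u\to G$, $(t,u)\mapsto tu$, is an isomorphism of algebraic groups, which is the assertion.

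The one genuine obstacle is the centrality of the maximal torus, i.e. the inductive step; it rests on the rigidity of tori, the positive-dimensionality of the centre of a connected nilpotent group, the semidirect-product structure of connected solvable groups, and the fact that the Jordan components of an element of a closed linear group lie in that group — all standard. For the purposes of this paper it is enough to cite \cite[Proposition~19.2]{humphreys} directly.
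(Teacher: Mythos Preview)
The paper does not supply its own proof of this fact; it simply quotes \cite[Proposition~19.2]{humphreys} as background. Your proposal goes further and sketches the classical argument from Humphreys, and the sketch is correct: the inductive reduction via a nontrivial connected central subgroup $S$, the observation that $T$ is the unique (hence characteristic) maximal torus of $TS$, and the appeal to rigidity of tori to pass from ``$G$ normalises $T$'' to ``$G$ centralises $T$'' are exactly the standard steps, and the bookkeeping with the Jordan decomposition afterwards is routine. Your own closing remark is the right conclusion: for this paper a bare citation is all that is called for, and that is precisely what the paper does.
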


Macintyre's theorem on abelian groups   {of \fmrd} will be used.

\begin{fact} {\rm \cite[Theorem 6.7]{bn}}
	Let $G$ be an abelian group of \fmrd, then $G$ can be written as $G=B\oplus D$, where $B$ is a subgroup of bounded exponent and $D$ is  {a definable} divisible subgroup.
	\label{macintyre}
\end{fact}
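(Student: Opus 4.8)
\emph{Proof proposal.} The plan is to pull out the divisible part of $G$ as a single definable subgroup by invoking the descending chain condition on definable subgroups, which holds in every group of \fmrd: if $H<K$ are definable subgroups then either $\rk(H)<\rk(K)$, or else $\rk(H)=\rk(K)$, in which case $[K:H]$ is finite and nontrivial and $\ddeg(H)<\ddeg(K)$; so the pair $(\rk,\ddeg)$ strictly decreases along any proper descending chain, and being valued in $\mathbb N\times\mathbb N$ with the lexicographic order it cannot do so forever. Write $G$ additively. For every $n\geqslant 1$ the subgroup $n!\,G$ is definable, being the image of $G$ under the definable endomorphism $x\mapsto n!\,x$, and $(n+1)!\,G\leqslant n!\,G$. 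Hence $\bigl(n!\,G\bigr)_{n\geqslant 1}$ is a descending chain of definable subgroups and stabilises: there is $N$ with $N!\,G=(N+k)!\,G$ for all $k\geqslant 0$. Put $D:=N!\,G$; this is definable, and $D=\bigcap_{n\geqslant 1}n!\,G=\bigcap_{n\geqslant 1}nG$.

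Next I would check that $D$ is divisible. For each $j\geqslant 1$ one has $D=(N+j)!\,G=(N+j)\cdot\bigl((N+j-1)!\,G\bigr)=(N+j)\,D$, using $(N+j-1)!\,G=D$; so $D$ is $\ell$-divisible for every integer $\ell\geqslant N+1$. Given a prime $p$, pick $j\in\{1,\dots,p\}$ with $p\mid N+j$, say $N+j=pm$; then $D=(N+j)\,D=p\,(mD)\leqslant pD\leqslant D$, so $pD=D$. As $D$ is $p$-divisible for every prime $p$, it is divisible.

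Finally, a divisible subgroup of an abelian group is a direct summand, divisible abelian groups being injective $\mathbb Z$-modules; hence $G=D\oplus B$ for some subgroup $B$ with $B\cong G/D=G/N!\,G$. Since $N!\,x=0$ for every $x\in G/N!\,G$, the group $B$ has exponent dividing $N!$, in particular bounded exponent. This gives the decomposition $G=B\oplus D$ with $D$ definable and divisible and $B$ of bounded exponent.

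The one genuinely substantive point is the first: a priori $\bigcap_{n}nG$ is an intersection of infinitely many definable subgroups and need not be definable, and it is precisely the chain condition --- that is, finiteness of the rank --- that collapses it to the definable subgroup $N!\,G$. Everything after that is elementary arithmetic of $\mathbb Z$-modules; note in particular that one should not expect the complement $B$ to be definable, and the statement does not ask for this.
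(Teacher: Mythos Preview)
Your argument is correct and is essentially the classical proof of Macintyre's theorem: use the descending chain condition to stabilise $(n!\,G)_n$ at $D=N!\,G$, check divisibility of $D$ by elementary arithmetic, and split off $D$ by injectivity of divisible abelian groups, leaving a complement of exponent dividing $N!$. Note that the paper itself does not prove this statement---it is quoted as a fact from \cite[Theorem~6.7]{bn}---so there is no in-paper proof to compare against; what you have written is precisely the standard argument one finds in that reference, and your closing remark about $B$ not being definable in general is also accurate and worth keeping.
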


In the above theorem; being divisible, $D$ is necessarily connected. If $G$ is connected, then $B$ is also necessarily connected.

Next we introduce the concept of the generalised Fitting subgroup and state a related fact. A group $G$ of \fmr has a unique maximal normal nilpotent subgroup, called the Fitting subgroup, which is denoted by $F(G)$. The layer of $G$ is defined to be the subgroup of $G$ generated by (definable) subnormal quasisimple subgroups, and it denoted by $E(G)$. Finally the generalised Fitting subgroup of $G$ is defined as $F^*(G)=\langle F(G), E(G)\rangle$.

\begin{fact}{\rm \cite[Section I.7]{abc}} If $G$ is a group of \fmrd, then $F^*(G)=F(G)\ast E(G)$ is definable, $C_G (F^{*}(G))=Z( F(G))$ and
\[
C_G^\circ(F^{*\circ}(G)) \leqslant F(G).
\]
\label{genfrat}
\end{fact}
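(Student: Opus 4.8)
The plan is to transport Bender's theory of the generalised Fitting subgroup from finite group theory to the \fmr setting, replacing induction on group order by the descending chain condition on definable subgroups together with the definability of the relevant pieces. I will assume the standard results that $F(G)$ is definable and nilpotent and that a quasisimple subnormal subgroup of a group of \fmr (a \emph{component}) is definable and infinite, hence of positive rank. The first task is the behaviour of components, for which the key tool is the generation lemma: if $L$ is a component of $G$ and $H\trianglelefteq\trianglelefteq G$ is subnormal, then either $L\leqslant H$ or $[L,H]=1$. This is proved by induction on the subnormal defect of $H$ using only that $L$ is perfect with $L/Z(L)$ simple, so it transfers verbatim. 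Two consequences follow at once: distinct components commute; and, since $F(G)$ is normal (hence subnormal) while no component lies inside the solvable group $F(G)$, every component centralises $F(G)$, so $[E(G),F(G)]=1$.

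Next I would bound the number of components. Since distinct components commute and each has positive rank, the central product of $k$ of them has rank at least $k$, so subadditivity of rank caps their number by $\rk(G)$. Being finitely many pairwise-commuting definable subgroups, they generate the definable subgroup $E(G)$, and $F^*(G)=F(G)\ast E(G)$ is a definable central product. A short computation then yields $Z(F^*(G))=Z(F(G))$: the subgroup $Z(E(G))$ is characteristic, nilpotent and normal in $G$, hence contained in $F(G)$, and as it centralises $F(G)$ it in fact lies in $Z(F(G))$.

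The substance is the centralizer identity. Writing $C=C_G(F^*(G))\trianglelefteq G$, I would first observe $F^*(C)\leqslant F^*(G)$: the Fitting subgroup $F(C)$ is characteristic in the normal subgroup $C$, hence normal and nilpotent in $G$, so $F(C)\leqslant F(G)$; and each component of $C$ is subnormal quasisimple in $G$, hence a component of $G$, so $E(C)\leqslant E(G)$. Because $C$ centralises $F^*(G)\geqslant F^*(C)$, we get $F^*(C)\leqslant Z(C)$; as $E(C)$ is perfect and central it is trivial, leaving $F(C)=Z(C)$. It then suffices to show such a $C$ is nilpotent, for then $C\trianglelefteq G$ nilpotent gives $C\leqslant F(G)$, and since $C$ centralises $F(G)$ we conclude $C\leqslant Z(F(G))$; the reverse inclusion being immediate, this proves $C_G(F^*(G))=Z(F(G))$. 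The connected statement $C_G^\circ(F^{*\circ}(G))\leqslant F(G)$ follows by running the identical argument with connected components and connected centralisers throughout, concluding that $C_G^\circ(F^{*\circ}(G))$ is connected nilpotent and normal, hence inside $F(G)$.

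The hard part, and the only place where genuinely \fmr input beyond definability enters, is the final nilpotency step: deducing from $F(C)=Z(C)$, i.e.\ from $C$ centralising its own Fitting subgroup, that $C$ is nilpotent. Here I would invoke the structure theory of solvable groups of \fmr, in which $C_H^\circ(F^\circ(H))\leqslant F(H)$ is available, after passing to $C^\circ$ and checking that the vanishing of the layer forces the relevant section to be solvable so that this self-centralising property of the Fitting subgroup applies. Making this reduction go through cleanly, rather than the formal Bender-style bookkeeping that surrounds it, is where I expect the real work to lie.
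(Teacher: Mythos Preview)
The paper does not prove this statement at all: it is recorded as a \emph{Fact} with a bare citation to \cite[Section I.7]{abc}, and no argument is given. So there is no ``paper's own proof'' to compare against; your write-up is a proof sketch for a result the authors simply import.

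That said, your outline is essentially the standard Bender-style argument as adapted to the finite Morley rank setting in \cite{abc} and \cite{bn}: normality and commutation properties of components via the subnormal generation lemma, finiteness of the set of components by a rank bound, and then the self-centralising property of $F^*$. You correctly isolate the one genuinely delicate point, namely passing from $E(C)=1$ and $F(C)=Z(C)$ to nilpotency of $C$. Your suggested route---``the vanishing of the layer forces the relevant section to be solvable''---is not quite an argument yet: in the finite Morley rank world one needs the structure of minimal definable normal subgroups (they are either abelian or direct products of definable simple groups) to conclude that a group with trivial layer has solvable connected component, and only then can the solvable-case identity $C_H^\circ(F^\circ(H))\leqslant F(H)$ be invoked. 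This is exactly how \cite[Section I.7]{abc} proceeds, so if you flesh out that step your sketch would match the cited source.
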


The below fact from Borovik and Nesin's book describes the automorphism groups of infinite simple algebraic groups in the \fmr setting.

\begin{fact}{\rm \cite[Theorem 8.4]{bn}}	Let $G\rtimes H$ be a group of finite Morley rank, where
	$G$ and $H$ are definable and connected subgroups. Suppose that $G$ is an infinite simple algebraic group over an
	algebraically closed field and $C_H (G) = 1$. In this case,
	$H$ lies in the group ${\rm Inn}(G)$ of inner automorphisms of $G$,  when viewed as a subgroup
	of the group of all automorphisms of $G$. \label{automs}
\end{fact}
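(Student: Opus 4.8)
The plan is to pass from the abstract action to its image in $\operatorname{Aut}(G)$ and to show that this image is contained in $\operatorname{Inn}(G)$. First I would form the homomorphism $\phi\colon G\rtimes H\to\operatorname{Aut}(G)$ sending each element to the automorphism it induces by conjugation on the normal subgroup $G$. Since $G$ is simple we have $Z(G)=1$, so $\phi$ maps $G$ isomorphically onto $\operatorname{Inn}(G)$; since $C_H(G)=1$, the restriction $\phi|_H$ is injective, and I identify $H$ with $\phi(H)\leqslant\operatorname{Aut}(G)$, as in the statement. The image $M:=\phi(G\rtimes H)$ is then a connected group of \fmr (a definable image of the connected group $G\rtimes H$) containing $\operatorname{Inn}(G)\cong G$ as a definable normal subgroup. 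The theorem reduces to showing that $\bar M:=M/\operatorname{Inn}(G)$ is trivial, that is, that the image of $H$ in $\operatorname{Out}(G)=\operatorname{Aut}(G)/\operatorname{Inn}(G)$ is trivial; note that $\bar M$ is again connected and of \fmrd.

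Next I would invoke the classical description of the automorphisms of a simple algebraic group over an \acf $K$ (Steinberg, Borel--Tits): every automorphism is a product of an inner, a graph and a field automorphism. Consequently $\operatorname{Out}(G)$ has a normal subgroup $\Phi$, the group of field automorphisms (isomorphic to $\operatorname{Aut}(K)$), with finite quotient $\operatorname{Out}(G)/\Phi$ corresponding to the graph automorphisms. Because $\bar M$ is connected, its image in the finite group $\operatorname{Out}(G)/\Phi$ is trivial; hence $\bar M$ embeds into $\Phi$, i.e.\ $\bar M$ is realised by a group of field automorphisms of $K$.

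The crux is then to rule out a nontrivial connected group of field automorphisms arising from a definable action. The key input is that the automorphisms occurring here are definable in the ambient structure of \fmrd, and that the definable field automorphisms of an \acf $K$ of \fmr are severely restricted: in characteristic $0$ only the identity occurs, while in characteristic $p$ they form the cyclic group $\langle\operatorname{Frob}\rangle\cong\mathbb Z$ generated by the Frobenius $x\mapsto x^{p}$ and its inverse. In either case, if $\bar M\neq 1$ it would embed, as an abstract group, into a group with no nontrivial connected part: in characteristic $p$ it would be abstractly isomorphic to a nontrivial subgroup of $\mathbb Z$, hence to $\mathbb Z$ itself, and then the image of the definable squaring map would be a definable subgroup of index $2$, contradicting the connectedness of $\bar M$. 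Thus $\bar M=1$, which is precisely the assertion $H\leqslant\operatorname{Inn}(G)$.

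I expect the main obstacle to be this last step: making the field-automorphism analysis rigorous. One has to exhibit a definable copy of $K$ inside $G$ on which the induced outer action is visibly a field automorphism (for example a root subgroup $U_\alpha\cong K^{+}$ together with the multiplicative action of a maximal torus), so that the \fmr rigidity of $K$ applies, and one must track throughout that $\bar M$ remains definable, connected and of \fmrd. The subtle point is that the Frobenius is definable yet is not an algebraic isomorphism, so the desired conclusion cannot be obtained merely by declaring definable automorphisms algebraic; it is the discreteness of $\langle\operatorname{Frob}\rangle$, clashing with connectedness, that closes the argument.
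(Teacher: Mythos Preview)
The paper does not prove this statement: it is recorded as a Fact with the citation \cite[Theorem~8.4]{bn} and no argument is given, so there is no in-paper proof to compare your proposal against.

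Your outline is nonetheless the standard route to this result and is essentially what one finds in the cited reference: pass to $\operatorname{Aut}(G)$, invoke the Borel--Tits/Steinberg description of automorphisms of a simple algebraic group as products of inner, graph and field automorphisms, eliminate the (finite) graph part by connectedness of $\bar M$, and then eliminate the field part by the rigidity of definable automorphisms of an \acf interpreted in a structure of \fmrd. You have correctly located the genuine work: one must exhibit, inside $G$, a definable copy of $K$ (a root subgroup together with the torus action, or a root $\operatorname{SL}_2$) on which elements of $H$ act as field automorphisms, so that the \fmr constraint on definable automorphisms of $K$ bites. One cosmetic point: $\operatorname{Aut}(G)$ is not a priori part of the ambient structure, so rather than working literally inside it you should phrase everything via the interpretable quotient $(G\rtimes H)/\ker\phi$; then $M$ and $\bar M$ are honestly of \fmrd\ and connected, and your index-$2$ contradiction against $\bar M$ being abstractly infinite cyclic is clean.
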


A result by Poizat will be useful.

\begin{fact}{\rm \cite[Theorem 1]{poizat-quelques}, \cite[Proposition II.4.4]{abc}}
	Let $G$  be a simple subgroup of $\operatorname{GL}_n(F)$, where $F$ is an \acf of  {positive characteristic}. Assume that $(\operatorname{GL}_n(F), G)$ is a structure of \fmrd. Then $G$ is definably isomorphic to an algebraic group over $F$.
	\label{poizat}
\end{fact}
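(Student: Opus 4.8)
The plan is to use the structure theory of definably primitive groups of affine type (developed in Section~\ref{primitive}) to reduce the statement to a single application of Theorem~\ref{main}. By the definition of affine type, the socle of $G$ is a definable connected abelian normal subgroup $V$ that acts regularly on $X$, and moreover this socle is self-centralising, so that $C_G(V)=V$. Fixing a point of $X$ and using the regular action of $V$, I identify $X$ with $V$, the chosen point becoming $0\in V$; under this identification $G$ splits as $G=V\rtimes H$, where $H=G_0$ is the stabiliser of $0$ and acts on $V$ by conjugation, that is, as a group of definable automorphisms of the abelian group $V$. In particular $\rk(V)=\rk(X)\leqslant n$.

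The first genuine step is the passage from $(n+1)$-fold to $n$-fold generic transitivity. Since $G$ is generically $(n+1)$-transitive on $X\cong V$, it is transitive on a generic subset $\Omega\subseteq V^{n+1}$. Given a generic tuple $(v_0,v_1,\dots,v_n)\in\Omega$, translation by $-v_0$, which is an element of the normal subgroup $V$, carries it to $(0,v_1-v_0,\dots,v_n-v_0)$. If two generic tuples are brought into the slice $\{0\}\times V^n$ in this way, any $g\in G$ relating them necessarily fixes $0$, hence lies in $H$; thus $G$-transitivity on $\Omega$ forces $H$-transitivity on the image of $\Omega$ in $\{0\}\times V^n\cong V^n$. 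A rank computation, $\rk(V^{n+1})=(n+1)\rk(V)$ with $\rk(V)$ absorbed by the $V$-translation, shows this image is generic in $V^n$, so $H$ acts generically $n$-transitively on $V$.

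Next I verify the hypotheses of Theorem~\ref{main} for the action of $H$ on $V$. The group $V$ is abelian, hence connected solvable; the action of $H$ is definable; it is faithful because $C_H(V)=H\cap C_G(V)=H\cap V=1$, using self-centrality of the socle together with $H\cap V=1$, which holds since $V$ acts regularly and $H$ fixes $0$; and $n\geqslant\rk(V)$. Theorem~\ref{main} therefore produces an \acf $K$ with $V\cong K^n$ as a $K$-vector space, $H\cong\operatorname{GL}_n(K)$, and the $H$-action on $V$ equivalent to the natural action of $\operatorname{GL}_n(K)$ on $K^n$; in particular $n=\rk(V)=\rk(X)$.

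Assembling the pieces, $G=V\rtimes H\cong K^n\rtimes\operatorname{GL}_n(K)=\operatorname{AGL}_n(K)$, and the identification $X\cong V\cong K^n={\mathbb A}_n(K)$ intertwines $G\curvearrowright X$ with the natural affine action $\operatorname{AGL}_n(K)\curvearrowright{\mathbb A}_n(K)$, as required. I expect the main obstacle to lie not in the final invocation of Theorem~\ref{main} but in the setup: extracting from the definition of affine type that the socle really is a connected, self-centralising, abelian regular normal subgroup, so that both the identification $X\cong V$ and the splitting $G=V\rtimes H$ are legitimate, and checking carefully that genericity is preserved under the $V$-translation, since generic $n$-transitivity asserts only that a single orbit is generic rather than honest transitivity on all of $V^n$.
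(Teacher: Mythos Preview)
Your proposal addresses the wrong statement. The item labelled Fact~\ref{poizat} is Poizat's theorem: if $G$ is a simple subgroup of $\operatorname{GL}_n(F)$ over an algebraically closed field of positive characteristic and $(\operatorname{GL}_n(F),G)$ has finite Morley rank, then $G$ is definably isomorphic to an algebraic group over $F$. This is a cited result from \cite{poizat-quelques} and \cite{abc}; the paper offers no proof of its own, and none is expected. What you have written is instead an argument for Corollary~3 in Section~\ref{primitive}, about definably primitive actions of affine type. Nothing in your text --- socles, regular abelian normal subgroups, the passage from generic $(n+1)$-transitivity to generic $n$-transitivity, the invocation of Theorem~\ref{main} --- bears on Poizat's linearity theorem.

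As a proof of Corollary~3 your argument is broadly correct and matches the paper's own (terse) proof: identify $X$ with the regular abelian normal subgroup $S$ via Fact~\ref{macpillaffine}, observe that the point stabiliser acts generically $n$-transitively on $S$, and apply Theorem~\ref{main}. But as a proof of Fact~\ref{poizat} it is simply a non sequitur. If you were asked to supply a proof of Fact~\ref{poizat}, the honest answer is that this is a nontrivial theorem of Poizat relying on Wagner's result that fields of finite Morley rank and positive characteristic are algebraically closed, and the paper treats it as a black box; you should do the same, or reproduce Poizat's argument directly rather than invoking the machinery of Section~\ref{primitive}.
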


We will use the following corollary of the classification of simple groups of even type from \cite{abc}. Recall that $O_2(G)$ stands for the maximal normal 2-subgroup in $G$.

\begin{fact}{\rm \cite[Proposition X.2,  {page 502}]{abc}} \label{centralproduct} Let $G$ be a connected group of finite Morley rank
	containing no nontrivial $2$-torus. Then $O_2(G) = O_2^\circ(G)$ and it is a definable
	unipotent subgroup of $G$. The quotient $G/O_2^\circ(G)$ has the form $Q\ast S$ with $S$
	a central product of quasisimple algebraic groups over algebraically closed
	fields of characteristic 2, and $Q$ a connected group without involutions.
\end{fact}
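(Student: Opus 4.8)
The plan is to reduce the statement to the classification of simple groups of even type, using generalised Fitting subgroup theory (Fact~\ref{genfrat}) as the organising tool. First I would record the effect of the hypothesis that $G$ contains no nontrivial $2$-torus: any connected nilpotent $2$-subgroup is then of bounded exponent, i.e.\ $2$-unipotent. In particular the connected component $O_2^\circ(G)$ of the maximal normal $2$-subgroup is a definable connected unipotent normal subgroup — the largest such — so the unipotency claim is immediate for $O_2^\circ(G)$; the equality $O_2(G)=O_2^\circ(G)$ I would postpone to the very end, once the central structure of $G$ is understood. Passing to the quotient $\bar G := G/O_2^\circ(G)$, which is again connected of \fmrd\ with no $2$-torus, I would arrange that $O_2^\circ(\bar G)=1$, so that $\bar G$ carries no connected normal $2$-unipotent subgroup.

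Next I would apply Fact~\ref{genfrat} to write $F^*(\bar G)=F(\bar G)\ast E(\bar G)$ with $C_{\bar G}(F^*(\bar G))=Z(F(\bar G))$, and analyse the quasisimple components of the layer $E(\bar G)$. Each component $L$ inherits the no-$2$-torus hypothesis, so it cannot be of odd type, since an odd type quasisimple group contains a nontrivial $2$-torus. Hence each component is either of even type or contains no involutions at all. For the even type components, which form the heart of the argument, I would invoke the classification of simple groups of even type: such a quasisimple group is a quasisimple algebraic group over an \acf\ of characteristic $2$, with Fact~\ref{poizat} available to upgrade abstract simplicity to a definable algebraic structure. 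Collecting the even type components yields a normal subgroup $S\trianglelefteq\bar G$ that is a central product of quasisimple algebraic groups over \acfs\ of characteristic $2$.

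It then remains to split off $S$ and to control the remaining involutions. Setting $Q:=C_{\bar G}(S)$, I would use Fact~\ref{automs} componentwise: the connected group of automorphisms that $\bar G$ induces on $S$ is inner, so $\bar G=S\cdot C_{\bar G}(S)$ with $[S,Q]=1$ and $S\cap Q=Z(S)$, which gives the central product $\bar G=Q\ast S$. The delicate point is to show that $Q$ has no involutions: since $O_2^\circ(\bar G)=1$ and every even type component has been absorbed into $S$, one argues that any involution of $\bar G$ must already lie in the algebraic part $S$; an involution centralising $S$ yet lying outside it would, through the even type analysis of its centraliser, reintroduce either a $2$-torus or an even type component disjoint from $S$, a contradiction. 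Finally I would return to the first claim: centres of algebraic groups in characteristic $2$ contain no involutions, as there are no nontrivial $2$-power roots of unity in characteristic $2$. Thus $Z(S)$ has no involutions, $Q$ has none by the previous step, hence $Z(\bar G)$ has none; since $O_2(G)/O_2^\circ(G)$ is a finite normal — therefore central — $2$-subgroup of $\bar G$, it must be trivial, giving $O_2(G)=O_2^\circ(G)$.

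The main obstacle is unmistakably the even type step: identifying each even type quasisimple component as an algebraic group over an \acf\ of characteristic $2$ is precisely the content of the long classification of simple groups of even type in \cite{abc}, and is not to be reproved here; the surrounding argument is bookkeeping with the generalised Fitting subgroup, the automorphism Fact~\ref{automs}, and the absence of $2$-torsion in characteristic $2$. A secondary difficulty is the involution-freeness of $Q$, which genuinely requires $2$-local analysis rather than formal manipulation.
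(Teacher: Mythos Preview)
The paper does not prove this statement: it is recorded as a \emph{Fact} with a bare citation to \cite[Proposition~X.2, page~502]{abc}, so there is no ``paper's own proof'' to compare against. Your proposal is therefore not a competing proof but an attempted reconstruction of the argument behind the cited result.

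As a sketch of what lies behind the citation, your outline is broadly on target: the absence of a $2$-torus forces connected $2$-subgroups to be unipotent, one passes to $\bar G=G/O_2^\circ(G)$, and the heavy lifting is the classification of simple groups of even type from \cite{abc}, which you correctly flag as the step one does not reprove. Two places where your sketch is thinner than a proof would need to be: first, the claim that $Q=C_{\bar G}(S)$ has no involutions is, as you acknowledge, genuinely nontrivial, and in \cite{abc} this is handled via substantial $2$-local analysis (signalizer and strong embedding machinery), not by the brief contradiction you indicate; second, your decomposition $\bar G=S\ast Q$ via Fact~\ref{automs} presupposes that the layer $E(\bar G)$ together with $C_{\bar G}(E(\bar G))$ already generates $\bar G$, which requires control of $F^\circ(\bar G)$ and of possible degenerate-type components---this is again part of the ambient even-type theory rather than something that drops out of Fact~\ref{genfrat} alone. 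In short: your plan names the right ingredients, but the actual proof in \cite{abc} is the culmination of that book and is considerably more intricate than the bookkeeping you describe; the present paper simply quotes the result.
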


Next, we state Clifford's Theorem.  {Recall that when a group $G$ acts on an another group $V$ without fixing any non-trivial proper subgroup setwise, we call the action \em{irreducible}.}

	\begin{fact}{\rm \cite[Theorem 11.8]{bn}} Let $G$ be a group of \fmr acting definably, faithfully and irreducibly  on an
			abelian group $V$. Let $H$ be a definable connected normal subgroup of $G$ such that $C_V(H)=0$.
			Then the action of $H$ on $V$ is completely reducible; that is, $V$ is a direct sum of finitely many irreducible $H$-modules. Moreover, these irreducible submodules are conjugate to each other under the action of $G$.
\label{clifford}
\end{fact}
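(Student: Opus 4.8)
The plan is to adapt the classical proof of Clifford's theorem, replacing its combinatorial core with rank arithmetic and the descending chain condition on definable subgroups, which holds in any group of \fmrd.

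First I would produce an irreducible $H$-submodule. The nonzero $H$-invariant definable subgroups of $V$ form a nonempty family (it contains $V$), so by the descending chain condition it has a minimal member $W$. I claim $W$ is infinite and connected: since $H$ is connected it acts trivially on every finite set, so if $W$ were finite we would have $W\leqslant C_V(H)=0$; and $W^\circ$ is a nonzero $H$-invariant definable subgroup of $W$, so minimality forces $W=W^\circ$. Thus $W$ is a genuine irreducible $H$-module, and the hypothesis $C_V(H)=0$ guarantees, more generally, that no minimal $H$-invariant definable subgroup is a trivial module.

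Next I would pass to $G$-conjugates. For $g\in G$, normality of $H$ gives $h(gW)=g(g^{-1}hg)W=gW$ for all $h\in H$, so $gW$ is $H$-invariant; applying the same computation to $g^{-1}$ shows that any proper $H$-invariant definable subgroup of $gW$ would pull back to one of $W$, so $gW$ is again irreducible, and $\rk(gW)=\rk(W)$ since $g$ acts as a definable automorphism of $V$. The sum $\sum_{g\in G}gW$ is $G$-invariant; being a sum of definable connected subgroups it coincides with a finite subsum and is therefore a definable subgroup, and it is nonzero, so irreducibility of the $G$-action forces $\sum_{g\in G}gW=V$.

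Finally I would extract a finite direct sum. Among the finite subsets $\{g_1,\dots,g_k\}\subseteq G$ for which $g_1W+\cdots+g_kW$ is direct, additivity of rank in a direct sum gives $\rk(g_1W\oplus\cdots\oplus g_kW)=k\,\rk(W)\leqslant\rk(V)$, so $k$ is bounded, and I may choose such a direct sum $U=g_1W\oplus\cdots\oplus g_kW$ of maximal rank. If $U\neq V$ then, since $V$ is the sum of all conjugates, some conjugate $gW$ is not contained in $U$; but then $gW\cap U$ is a proper $H$-invariant definable subgroup of the irreducible module $gW$, hence zero, so $U\oplus gW$ is a strictly larger direct sum of conjugates, contradicting maximality. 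Therefore $V=g_1W\oplus\cdots\oplus g_kW$ is a finite direct sum of irreducible $H$-modules, each conjugate to $W$ and hence to one another, which is precisely complete reducibility together with the conjugacy assertion. The steps that are classical need no comment; the work specific to the \fmr setting is concentrated in two places, namely the argument that minimal $H$-invariant definable subgroups are infinite and connected (where connectedness of $H$ and the hypothesis $C_V(H)=0$ are genuinely used), and the reduction of $\sum_{g\in G}gW$ to a finite, hence definable, subsum so that irreducibility can be invoked. I expect the latter to be the only point demanding real care, both instances resting on the descending chain condition and additivity of rank.
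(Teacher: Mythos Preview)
Your proof is correct and is essentially the standard adaptation of Clifford's argument to the finite Morley rank setting. Note, however, that the paper does not give its own proof of this statement: it is recorded as a Fact with a bare citation to \cite[Theorem 11.8]{bn}, so there is nothing in the paper to compare your argument against beyond the reference itself. Your write-up is a faithful reconstruction of that cited proof: the descending chain condition yields a minimal $H$-invariant definable subgroup, connectedness of $H$ together with $C_V(H)=0$ forces it to be infinite and connected, conjugates remain irreducible of the same rank, the sum of all conjugates is definable (via a finite subsum) and hence equals $V$ by $G$-irreducibility, and a maximal direct subsum is then all of $V$. One small point worth making explicit in your last paragraph: when you say $gW\cap U$ is ``hence zero'', you are using that $W$ (and so $gW$) was chosen minimal among \emph{all} nonzero $H$-invariant definable subgroups, so any proper $H$-invariant definable subgroup of $gW$ must be trivial; this is exactly what you set up earlier, but it is the step a reader is most likely to query.
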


The following classical result by Loveys and Wagner will be essential for the special case when $V$ is divisible abelian.

\begin{fact} {\rm \cite[Theorem A.20]{bn}}\label{loveyswagner}
	Let $G$ be an infinite group  {of \fmr} acting on an infinite divisible abelian group $V$  {of \fmr definably}. If the action is faithful and $G$-minimal, then there exists an \acf $F$ of characteristic 0 such that $V$ is a vector space over $F$, $G$ is definably isomorphic to a subgroup $H$ of $\operatorname{GL}(V)$, and the action  {$G$ on $V$} is equivalent to  {the action of $H$ on $V$}.
\end{fact}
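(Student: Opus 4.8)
The plan is to produce the field $F$ directly, as the ring of definable $G$-equivariant endomorphisms of $V$, after first reducing to the case where $V$ is torsion-free; the characteristic-$0$ conclusion will then drop out automatically.

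First I would reduce to torsion-free $V$. Since $G$ is infinite, $G^\circ$ is an infinite connected group acting definably on $V$ by group automorphisms, and by faithfulness $C_G(V)=1$. In a divisible abelian group the torsion subgroup is itself divisible, so its definable hull $S$ is a definable, connected, $G$-invariant subgroup. By $G$-minimality, either $S=0$ or $S=V$. In the latter case $V$ equals the definable hull of its own torsion, hence $V$ is a decent torus, and Fact~\ref{torusautom} forces the connected definable automorphism group induced by $G^\circ$ to be trivial; thus $G^\circ\leqslant C_G(V)=1$, contradicting that $G$ is infinite. Hence the torsion subgroup is trivial, $V$ is torsion-free divisible, and so $V$ is a $\mathbb{Q}$-vector space; any field that ends up acting on it is therefore of characteristic $0$.

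Next I would build the field via a Schur-type argument. Let $E$ denote the ring of definable endomorphisms of the group $V$ that commute with the action of $G$, which I take to be interpretable of finite Morley rank. For $0\neq\varphi\in E$, both $\ker\varphi$ and $\operatorname{im}\varphi$ are definable and $G$-invariant, so their connected components are $0$ or $V$ by $G$-minimality; since $V$ is torsion-free a finite kernel must be trivial, and $\operatorname{im}\varphi\neq 0$ forces $\operatorname{im}\varphi=V$. Thus every nonzero $\varphi$ is a definable automorphism and $E$ is a division ring. Crucially $E$ is infinite of characteristic $0$: because $V$ is torsion-free divisible, scalar multiplication by each rational $q$ is a definable $G$-equivariant endomorphism, giving an embedding $\mathbb{Q}\hookrightarrow E$. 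Invoking the classical structure theorem that an infinite division ring of finite Morley rank is a commutative \acf, I conclude $E=F$ is an \acf of characteristic $0$. The field $F=E$ makes $V$ into an $F$-vector space, and since $\rk(V)$ is finite with $\rk(F)\geqslant 1$ we get $\dim_F V<\infty$. By construction every element of $G$ commutes with the $F$-action, so $G$ acts $F$-linearly, and the faithful definable action yields a definable isomorphism of $G$ onto a subgroup $H\leqslant\operatorname{GL}_F(V)$ with equivalent action.

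The main obstacle is the justification, in the third paragraph, that the $G$-endomorphism ring $E$ is interpretable and of finite Morley rank, so that the structure theory of fields of finite Morley rank is available; the key mechanism here is that for a fixed $0\neq v\in V$ the evaluation map $\varphi\mapsto\varphi(v)$ is injective on the division ring $E$, which bounds $\rk(E)\leqslant\rk(V)$ and simultaneously controls $\dim_F V$. Everything else — the torsion reduction through Fact~\ref{torusautom} and the Schur's-lemma verification — is routine once $V$ is known to be torsion-free.
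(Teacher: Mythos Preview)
The paper does not prove this statement; it is quoted from \cite[Theorem~A.20]{bn} without argument, so there is no in-paper proof to compare against. Your outline follows the standard route behind that citation: kill torsion by ruling out the decent-torus alternative via Fact~\ref{torusautom} (this reduction is correct and cleanly argued), run Schur's lemma on the ring $E$ of definable $G$-equivariant endomorphisms of $V$, embed $\mathbb{Q}\hookrightarrow E$ using torsion-free divisibility, and then apply Macintyre's theorem on infinite division rings of finite Morley rank to conclude that $E=F$ is an algebraically closed field of characteristic~$0$.

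You have correctly isolated the only real gap, and it deserves slightly more than you supply. Injectivity of $\varphi\mapsto\varphi(v)$ is merely a set-theoretic embedding $E\hookrightarrow V$; on its own it does not make $E\cdot v$ a definable subset of $V$, nor does it render the transported ring operations definable, so the bound $\rk(E)\leqslant\rk(V)$ is not yet available and Macintyre's theorem cannot yet be invoked. The resolution in the literature is to parametrise $G$-equivariant endomorphisms by their graphs in $V\times V$: for each $w\in V$ the definable subgroup of $V\times V$ generated by the $G^\circ$-orbit of $(v,w)$ is uniformly definable in $w$ by Zilber's indecomposability theorem, and $w\in E\cdot v$ precisely when this subgroup is the graph of an endomorphism; the field multiplication is then read off from composition of graphs. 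Once that interpretability step is in place, your argument goes through as written.
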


{Recall that in the above context, $G$-minimality means that no proper infinite definable subgroup of $V$ is fixed setwise under the action of $G$. }

\begin{fact} {\rm \cite[Fact 3.3]{bbnotsharp}} For any prime $p$, a $p$-torus can  {definably} act on a connected elementary abelian $p$-group only trivially. \label{toralaction}
\end{fact}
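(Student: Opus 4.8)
The plan is to reduce to a faithful action and then derive a contradiction from the interplay between the characteristic‑$p$ linear algebra of the induced automorphisms, the divisibility of the torus, and the descending chain condition on definable subgroups. Let $T$ denote the $p$-torus and $V$ the connected elementary abelian $p$-group, with the action definable. First I would pass to a faithful action: the kernel $C_T(V)$ is a definable subgroup, and the definable quotient $T/C_T(V)$ of a divisible abelian $p$-group is again a divisible abelian $p$-group, i.e.\ a $p$-torus, acting faithfully on $V$. Assuming for contradiction that the original action is nontrivial, this quotient is a nontrivial $p$-torus acting faithfully. Regarding $V$ as a vector space over $\mathbb{F}_p$, its automorphism group is $\operatorname{GL}_{\mathbb{F}_p}(V)$ and its endomorphism ring is an $\mathbb{F}_p$-algebra; write $\phi_t$ for the endomorphism induced by $t$. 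If $t$ has order $p^k$ then $\phi_t^{p^k}=1$, so in characteristic $p$ the identity $(\phi_t-1)^{p^k}=\phi_t^{p^k}-1=0$ shows that $M_t:=\phi_t-1$ is nilpotent.

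Next I would exploit divisibility to build a chain of compatible $p$-th roots. Fix an element $u$ of order $p$ in $T$ (the socle of a nontrivial $p$-torus is nontrivial); by faithfulness $M_u\ne 0$. Using that $T$ is divisible, choose $u_0=u,u_1,u_2,\dots$ with $u_i^{\,p}=u_{i-1}$, so that $u_i$ has order $p^{i+1}$ and these elements lie in a Pr\"ufer subgroup isomorphic to $\mathbb{Z}_{p^\infty}$. Setting $M_i:=\phi_{u_i}-1$, the ``freshman's dream'' identity $(1+M_i)^p=1+M_i^{\,p}$, valid over $\mathbb{F}_p$, combined with $\phi_{u_i}^{\,p}=\phi_{u_{i-1}}$, gives $M_i^{\,p}=M_{i-1}$, and hence $M_i^{\,p^i}=M_0=M_u\ne 0$ for every $i$. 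In particular no $M_i$ is zero, consistent with faithfulness, and the nilpotency degrees of the $M_i$ grow without bound.

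The contradiction will come from the descending chain condition. The fixed-point groups $C_V(u_i)=\ker M_i$ are definable, and the relation $M_{i-1}=M_i^{\,p}$ gives $\ker M_i\subseteq\ker M_i^{\,p}=\ker M_{i-1}$, so $C_V(u_0)\supseteq C_V(u_1)\supseteq\cdots$ is a descending chain of definable subgroups of $V$. Since $V$ has \fmr this chain stabilises, say $C_V(u_n)=C_V(u_{n+1})$, which reads $\ker M_{n+1}=\ker M_{n+1}^{\,p}$. A routine fact about a single endomorphism --- once two consecutive kernels $\ker M^{\,j}$ and $\ker M^{\,j+1}$ coincide, all later ones coincide --- then forces $\ker M_{n+1}=\ker M_{n+1}^{\,j}$ for all $j$; as $M_{n+1}$ is nilpotent, taking $j$ large yields $\ker M_{n+1}=V$, that is $\phi_{u_{n+1}}=1$. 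This contradicts faithfulness since $u_{n+1}\ne 1$, so the action must be trivial. The main obstacle, and the heart of the argument, is precisely this last step: it is where divisibility of the torus (furnishing the $p$-th-root chain), the characteristic-$p$ identity (turning $p$-th roots into $p$-th powers of the nilpotent parts), and finiteness of Morley rank (supplying the chain condition) must be made to cooperate in order to collapse a unipotent automorphism to the identity.
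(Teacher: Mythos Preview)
The paper does not give its own proof of this statement; it is simply quoted as Fact~3.3 of \cite{bbnotsharp}, so there is nothing in the present paper to compare your argument against.

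Your proof is correct and self-contained: the reduction to a faithful action, the characteristic-$p$ identity $(\phi_t-1)^{p^k}=\phi_t^{p^k}-1=0$ in the $\mathbb{F}_p$-algebra $\operatorname{End}(V)$, the tower of $p$-th roots $u_i$ inside a Pr\"ufer subgroup giving $M_i^{\,p}=M_{i-1}$, and the descending chain condition on the definable centralisers $C_V(u_i)=\ker M_i$ all work exactly as you describe. The only point worth phrasing more carefully is the final step: what stabilisation actually gives you is $\ker M_{n+1}=\ker M_{n+1}^{\,p}$, not literally two \emph{consecutive} kernels coinciding; but since $\ker M_{n+1}\subseteq\ker M_{n+1}^{\,2}\subseteq\cdots\subseteq\ker M_{n+1}^{\,p}$ is an increasing chain, equality of its endpoints forces $\ker M_{n+1}=\ker M_{n+1}^{\,2}$, after which the routine stabilisation lemma applies verbatim and nilpotency of $M_{n+1}$ finishes the argument.
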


{The following fact was stated  in \cite{bbnotsharp} when $V$ is an elementary abelian $p$-group and $p$ is an odd prime. However, since the proof uses only the commutativity of $V$, we state it in this more general form below.}

\begin{fact} {\rm \cite[Proposition 4.5]{bbnotsharp}} \label{nonconnected} Let $G$ be a group of finite Morley rank  acting definably and faithfully on an abelian group $V$ of Morley rank $n$, which contains a definable subgroup $H \cong \mathop{\rm GL}_n(F)$ for an algebraically closed field $F$. Assume also that $V$ is definably isomorphic to the additive group of the $F$-vector space  $F^n$, and the action of $H$ on $V$ is the natural action. Then $H=G$.\end{fact}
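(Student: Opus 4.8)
The plan is to pin down $H=\operatorname{GL}_n(F)$ as the full centraliser in $G$ of its own centre, and the crux of that is to prove that $H$ — equivalently the central torus $Z(H)$, equivalently $[H,H]$ — is normal in $G$.

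First I dispose of $n=1$: there $V\cong F^{+}$ is strongly minimal, so by Fact~\ref{strmin} the connected group $G^{\circ}$ acts on $V$ as $F'^{*}$ by multiplication for some \acf $F'$, and a rank count gives $G^{\circ}=H$; an element $g$ of $G$ normalising $G^{\circ}=F^{*}$ induces via $g(\lambda x)=\sigma(\lambda)g(x)$ an automorphism $\sigma$ of $F^{*}$ that additivity of $g$ forces to be additive as well — hence a definable finite-order field automorphism of $F$, hence trivial — so $g$ is $F$-linear and lies in $H$. So assume $n\geqslant 2$ and put $L:=[H,H]\cong\sll_n(F)$, a quasisimple group acting on $V\setminus\{0\}$ transitively; in particular $C_V(L)=0$ and $V$ is $L$-irreducible, hence $G$-irreducible. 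Next I compute two centralisers. An additive automorphism of $V$ commuting with all transvections of $\sll_F(V)$ preserves every line and every hyperplane, so is $F$-semilinear, and commuting with all of $\sll_n(F)$ kills its field-automorphism part; by Schur's Lemma it is then a scalar. Thus $C_G(L)=F^{*}\cdot\mathrm{id}_V=Z(H)=:T$, a decent torus. Dually, anything in $G$ commuting with $T$ is $F$-linear, so $C_G(T)=\operatorname{GL}_F(V)=H$; in particular $C_G(H)=T$, $N_G(H)=N_G(T)$, and by rigidity of decent tori (Fact~\ref{torusautom}) the index $[N_G(H):H]$ is finite.

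The main work — and the step I expect to be the real obstacle — is to prove $L\trianglelefteq G$. Since $V$ is $G$-irreducible while $V|_L$ is already irreducible, Clifford's Theorem (Fact~\ref{clifford}) shows that \emph{every} definable connected normal $N\trianglelefteq G$ with $C_V(N)=0$ acts irreducibly on $V$. Applying this to $N=F(G)^{\circ}$: if $Z(F(G)^{\circ})\neq 1$ it is a connected abelian normal subgroup acting faithfully, and the homogeneity of its action on $V$ — which I get from the fact that $\sll_n(F)$ has no proper definable subgroup of finite index, so $L$ cannot permute the homogeneous components nontrivially — makes it embed definably into the multiplicative group of the endomorphism field of a homogeneous component; since unipotent and toral parts are orthogonal, it must be a decent torus, so $L$ centralises it (Fact~\ref{torusautom}), and a rank comparison with $T=C_G(L)$ gives $F(G)^{\circ}=T$. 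Feeding $F(G)^{\circ}=T$ into Fact~\ref{genfrat} forces $E(G)\neq 1$ (otherwise $H\leqslant C_G^{\circ}(F^{*\circ}(G))\leqslant F(G)$ would be nilpotent, absurd for $n\geqslant 2$). Finally the layer $E(G)$ is a central product of quasisimple algebraic groups over \acfs, so the irreducible $E(G)$-module $V$ tensor-factorises over its endomorphism field according to the components, and $V|_L$ being the natural module forces all but one factor to be trivial; hence $E(G)$ has a single component, which can only be $L$. Therefore $L=E(G)\trianglelefteq G$, and so $T=C_G(L)\trianglelefteq G$ and $H=C_G(T)\trianglelefteq G$.

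With $H\trianglelefteq G$ the endgame is a controlled automorphism calculation. The quotient $G/T$ embeds into $\operatorname{Aut}(\operatorname{PSL}_n(F))$ with image containing $\operatorname{Inn}(\operatorname{PSL}_n(F))=H/T$, and since $F$ is algebraically closed there are no diagonal outer automorphisms; so by Fact~\ref{automs} the connected part of $G/T$ is inner, i.e.\ $G^{\circ}=H$. Then $G/H$ is a finite group of outer automorphisms of $\operatorname{GL}_n(F)$, and I rule out each possibility: a central outer automorphism would be realised by an element of $C_G(L)=T\leqslant H$; a field automorphism would be a definable finite-order automorphism of the interpretable field $F$, whose fixed field would be an \acf of \fmr of finite index in $F$ — impossible; and the graph automorphism (when $n\geqslant 3$) is incompatible with the $G$-action on $V$, since it would produce an additive $G$-equivariant bijection $V\to V^{*}$, and restricting such a map to the diagonal matrices $\operatorname{diag}(\lambda,\lambda^{-1},1,\dots,1)\in\sll_n(F)$ quickly contradicts additivity. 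Hence $G/H=1$, that is, $G=H$.
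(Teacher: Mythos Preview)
The paper does not prove this statement; it is quoted as Fact~\ref{nonconnected} from \cite[Proposition~4.5]{bbnotsharp}, with the remark that the proof there ``uses only the commutativity of $V$''. So there is no in-paper argument to compare yours against, and I assess your proof on its own.

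Your centraliser computations $C_G(L)=T$ and $C_G(T)=H$ are correct and form the right skeleton, the $n=1$ case is fine, and the endgame excluding field and graph outer automorphisms once $H\trianglelefteq G$ is essentially sound (the graph automorphism is most cleanly killed via its action on $T$ by inversion: an additive $g$ with $g(\lambda v)=\lambda^{-1}g(v)$ forces $(\lambda+\mu)^{-1}=\lambda^{-1}+\mu^{-1}$ generically, impossible in an infinite field). The genuine gap is the step ``$L\trianglelefteq G$''. Two things go wrong. First, the equality $F(G)^\circ=T$ presupposes $T\trianglelefteq G$ (otherwise $T\not\leqslant F(G)$ at all); what your Clifford--Zilber--rigidity argument legitimately yields is that \emph{if} $G$ has some nontrivial connected abelian normal subgroup $A$, then $A=T$ and so $T\trianglelefteq G$ --- but the case $F(G)^\circ=1$ is simply not treated. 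Second, and more seriously, your identification $E(G)=L$ is unjustified: the layer is generated by \emph{subnormal} quasisimple subgroups, and nothing you have written shows $L$ is subnormal in $G$; the tensor factorisation of $V$ according to the components of $E(G)$ tells you about those components, not about $L$, unless you already know $L$ is one of them. You also assert that the components of $E(G)$ are ``quasisimple algebraic groups over algebraically closed fields'' --- that is the Cherlin--Zilber conjecture and is not available here. Until normality of $L$ (equivalently of $T$, equivalently of $H$) in $G$ is established by an independent argument, the endgame cannot begin.
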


\section{Linearisation Theorems}

We need an expanded version of the following linearisation theorem of Borovik.

\begin{fact}{\rm \cite[Theorem~4]{avb}} \label{linearisation}
Let $p$ be a prime, $F$ an algebraically closed field of characteristic $p$, $V$  {a connected} elementary abelian $p$-group  {of finite Morley rank}, and $G$
a connected algebraic group over $F$, which acts definably, faithfully and irreducibly on $V$.

 Then $V$ has the structure of a finite dimensional vector space over $F$, and the action is of $G$ on $V$ is $F$-linear.
Moreover, the enveloping ring $R(G)$ in this action is the full matrix algebra $\mathop{M}_n(F)$, where $n = \dim_F(V)$.

 \end{fact}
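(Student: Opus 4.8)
The plan is to extract a definable ring from the action, identify it as a matrix algebra over a definable field, and only at the end match that field with the base field $F$ of $G$.

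\textbf{Step 1: the enveloping ring.} For $g\in G$ let $\rho(g)$ denote the corresponding definable automorphism of $V$, and let $R=R(G)$ be the subring they generate inside the ring of definable group-endomorphisms of $V$. Since $\{\rho(g):g\in G\}$ is definable, connected, contains the identity, and is closed under multiplication, the ascending chain $\rho(G)\subseteq\rho(G)+\rho(G)\subseteq\cdots$ of definable subsets stabilises by the chain condition on Morley rank and degree, so $R$ is the $\mathbb{F}_p$-span of $\rho(G)$, a definable connected ring of \fmr and characteristic $p$. Faithfulness of the action makes $R$ act faithfully on $V$; irreducibility means $V$ has no proper nonzero $\rho(G)$-invariant subgroup, hence no proper nonzero $R$-submodule. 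Thus $V$ is a faithful simple $R$-module and $R$ is a primitive ring.

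\textbf{Step 2: to a matrix algebra.} By Schur's Lemma $D:=\operatorname{End}_R(V)$ is an interpretable division ring of \fmrd, hence a field (division rings of \fmr are commutative); it is infinite, since $\rho$ embeds $G$ into $R$ and $G$ is infinite (the trivial group cannot act irreducibly on the infinite group $V$), so $D$ is algebraically closed of characteristic $p$ because infinite fields of \fmr are algebraically closed. Now $V$ carries a definable $D$-vector-space structure, and $n:=\dim_D V$ is finite because $\rk(D^{k})\geqslant k$ for all $k$; the Jacobson Density Theorem then gives $R=\operatorname{End}_D(V)\cong M_n(D)$, so $\rho$ embeds $G$ into $\operatorname{GL}_n(D)$ and $V\cong D^{n}$.

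\textbf{Step 3: identifying $D$ with $F$.} First, $G$ is not unipotent: otherwise it contains a central subgroup $A\cong F^{+}$ all of whose elements have order $p$, so $(\rho(a)-1)^{p}=\rho(a^{p})-1=0$ for $a\in A$, the nilpotent $\rho(a)-1$ has nonzero kernel on every irreducible $A$-submodule, which forces $A$ to act trivially there; since $A\trianglelefteq G$, Clifford's Theorem (Fact~\ref{clifford}) then yields $C_V(A)\ne 0$, a proper (by faithfulness) nonzero $G$-invariant subgroup, contradicting irreducibility. Hence $G$ has a nontrivial maximal torus $T$, a decent torus, whose image in $\operatorname{GL}_n(D)$ lies in an algebraic torus and therefore decomposes $V=\bigoplus_\chi V_\chi$ into weight spaces with weights $\chi\colon T\to D^{\times}$. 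Comparing these weights with the rational characters of $T$ over $F$, and using the rigidity of decent tori (Fact~\ref{torusautom}) together with Zilber's Field Theorem applied to a root subgroup $U_\alpha\cong F^{+}$ normalised by a one-dimensional subtorus, one produces a definable isomorphism $D\cong F$ identifying the two module structures. Transporting the $D$-structure along it makes $V$ a finite-dimensional $F$-vector space on which $G$ acts $F$-linearly, with $R(G)=\operatorname{End}_F(V)=M_n(F)$.

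\textbf{Main obstacle.} Steps~1 and~2 are soft finite-Morley-rank ring theory (generation, Schur, Jacobson density, plus the two cited properties of fields). The genuine difficulty is Step~3: $D$ is visible only through the module $V$, whereas $F$ comes from the abstract algebraic structure of $G$, and reconciling the two — showing the representation is, after all, the linear one over $G$'s own base field — is precisely where one must use in full that $G$ is \emph{algebraic over} $F$ rather than merely of \fmrd. I expect this reconciliation, carried out via the tori and root subgroups of $G$, to absorb the bulk of the argument.
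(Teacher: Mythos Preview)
The paper does not prove this statement. It is quoted as a \emph{Fact} with the citation \cite[Theorem~4]{avb} and no argument is given; the paper immediately moves on to the Expanded Linearisation Theorem (Theorem~\ref{cor-linearisation}), which \emph{uses} this fact as a black box. So there is no ``paper's own proof'' to compare your attempt against --- the proof lives entirely in the cited reference \cite{avb}.

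On your sketch itself, two remarks. First, in Step~2 the justification that $D=\operatorname{End}_R(V)$ is infinite is not right as written: the embedding $G\hookrightarrow R$ shows $R$ is infinite, not its commutant $D$. You need a separate reason (for instance, via the torsion in a maximal torus of $G$, or by arguing that $V$ infinite of finite Morley rank forces $D$ infinite once you control $\dim_D V$ --- but beware of circularity, since you invoke $D$ infinite to bound $\dim_D V$). Second, you have correctly located the real content in Step~3: matching the interpretable field $D$ coming out of Schur--Jacobson with the base field $F$ of the algebraic group $G$ is exactly the nontrivial part of \cite{avb}, and your outline via tori, weights, and root subgroups is the right shape, though what you have written is a plan rather than a proof. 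If you want to see how this reconciliation is actually carried out, you will need to consult \cite{avb} directly; the present paper does not reproduce it.
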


Below is a corollary and a generalisation of the above result.

\begin{theorem}
	{\rm (Linearisation Theorem, Expanded)} \label{cor-linearisation} Let $p$ be a prime, $V \rtimes G$ a group of finite Morley rank, where $V$ is a connected elementary abelian $p$-group of finite Morley rank, and $G$ is a connected group of finite Morley rank which acts on $V$ faithfully, definably, and irreducibly. Assume that $L \lhd G$ is a definable normal subgroup isomorphic to a simple algebraic group over an algebraically closed field $F$ of characteristic $p$.
	
	Then
	\begin{itemize}
		\item The abelian group $V$ has a structure of a finite dimensional   $F$-vector space, and  the action of $G$ on $V$ is $F$-linear.
		\item The groups $G$ can be decomposed as a central product
		\[
		G = L \ast M_1\ast \cdots \ast M_k \ast T
		\]
		where $M_i$, $i = 1, \dots, k$ are definable and isomorphic to simple algebraic groups over $F$ {\rm (}possibly $k=0${\rm )}, and $T=1$ or is a torus.
		\item The enveloping ring $R=R_V(G)$ additively generated by $G$ in $\mathop{{\rm End}} V$ is definable and  equals to the algebra $\mathop{{\rm End}}_F V$.
	\end{itemize}
\end{theorem}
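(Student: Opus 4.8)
The plan is to bootstrap from Borovik's Linearisation Theorem (Fact~\ref{linearisation}), which already handles the case where $G$ itself is a connected algebraic group acting irreducibly. The difficulty is that here we only know $L \lhd G$ is algebraic, while $G$ could a priori be much larger. So the first step is to understand the $L$-module structure of $V$. Since $L$ is a nontrivial connected normal subgroup and the $G$-action is faithful and irreducible, $C_V(L)$ is a proper $G$-invariant subgroup, hence $C_V(L)=0$ (it is definable, and if infinite it would contradict irreducibility after taking connected component; the finite part is handled since $V$ is connected and $C_V(L)$ is definable $G$-invariant, so $C_V(L) = 0$). Now Clifford's Theorem (Fact~\ref{clifford}) applies: $V = V_1 \oplus \cdots \oplus V_r$ as a sum of irreducible $L$-modules, all $G$-conjugate. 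Apply Fact~\ref{linearisation} to the action of $L$ on each $V_i$ (or better, pass to $L$ acting irreducibly on $V_1$ and use that the enveloping ring is a full matrix algebra): this gives each $V_i$ the structure of an $F$-vector space with $L$ acting $F$-linearly and $R_{V_i}(L) = \operatorname{End}_F V_i$.

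The second step is to assemble these into a single $F$-vector space structure on $V$ and show $G$ respects it. Because the $V_i$ are $G$-conjugate, one can transport the $F$-structure on $V_1$ to each $V_i$ via the conjugating elements; the subtlety is checking this is well-defined (independent of the choice of conjugator up to $F$-semilinearity) and that the scalar field $F$ is literally the same, not just abstractly isomorphic — here I would use that $F$ is recovered definably inside $R(L)$ as, say, the center or a fixed maximal torus's action, so $G$ normalising $L$ acts on this copy of $F$ by a field automorphism, and since $F$ is algebraically closed of finite Morley rank any \emph{definable} automorphism group is connected-trivial or handled by rigidity; more carefully, $G$ connected acting on the $\mathbb{F}_p$-points/torsion or by Fact~\ref{torusautom}-type rigidity forces $G$ to act $F$-linearly on $R(L)$, hence $F$-semilinearly on $V$, and connectedness kills the Frobenius twists. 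This yields the first bullet: $V$ is a finite-dimensional $F$-vector space and $G \leqslant \operatorname{GL}_F(V)$.

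The third step is the structural decomposition $G = L \ast M_1 \ast \cdots \ast M_k \ast T$. Once $G \leqslant \operatorname{GL}_F(V) = \operatorname{GL}_n(F)$ with $F$ of positive characteristic, and $(\operatorname{GL}_n(F), G)$ is a structure of finite Morley rank, I would invoke Poizat's theorem (Fact~\ref{poizat}) — after noting $G$ is connected and reducing to its simple sections, or applying it to the layer — to conclude that $G$ itself (or its relevant subquotients) is a definable algebraic subgroup of $\operatorname{GL}_n(F)$. Being a connected algebraic group acting irreducibly and linearly, $G$ is reductive (the unipotent radical would centralise a nonzero subspace, or act with nonzero fixed space, contradicting irreducibility), so $G = Z(G)^\circ \ast [G,G]$ with $[G,G]$ semisimple, a central product of simple algebraic groups; one of these factors must be $L$ (or contain it — but $L$ is normal and simple, hence is one of the simple factors), the others are the $M_i$, and $Z(G)^\circ$ is a torus $T$ (it is a central torus in a reductive group, acting faithfully since the $G$-action is faithful). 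This gives the second bullet. For the third bullet, the enveloping ring: $R(G) \supseteq R(L) = \operatorname{End}_F V_1$ on the first Wedderburn component, and since the $M_i$ and $T$ together with $L$ act irreducibly on all of $V$ with the $V_i$ permuted transitively, Jacobson density / the fact that $G$ acts irreducibly $F$-linearly forces $R(G) = \operatorname{End}_F V$; one must also check $R(G)$ is definable, which follows because $R(G)$ is spanned by finitely many elements of $G$ (bounded by $n^2$) and $G$ is definable.

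The main obstacle I anticipate is the coherence/descent argument in the second step: ensuring that the $F$-vector space structures on the Clifford components glue to a genuine $F$-structure on all of $V$ on which $G$ acts semilinearly, and then that connectedness of $G$ eliminates the semilinear (Frobenius) part so the action is honestly $F$-linear. This requires carefully identifying $F$ as a definable object attached to $L$ inside the enveloping ring and controlling how $G$ acts on it — essentially a rigidity argument for algebraically closed fields of finite Morley rank, for which Fact~\ref{torusautom} and the structure theory in Fact~\ref{automs} are the right tools. Everything after $G \leqslant \operatorname{GL}_n(F)$ is established is comparatively routine algebraic group theory combined with Poizat's linearity result.
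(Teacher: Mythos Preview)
Your overall architecture matches the paper's --- Clifford decomposition for $L$, then Borovik's linearisation on the pieces, then structural analysis of $G$ inside $\operatorname{GL}_F(V)$ --- but you make the middle step harder than needed and you have a real gap in the last step.

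\textbf{On $F$-linearity.} The paper's first move, before Clifford, is to invoke Fact~\ref{automs}: since $L$ is a simple algebraic group with no connected definable outer automorphisms, $G = L \ast C_G(L)$. Setting $C = C_G(L)$, the Clifford components $V_i$ are then conjugate by elements of $C$; because $C$ commutes with $L$ elementwise, the $V_i$ are \emph{isomorphic} $L$-modules, not merely conjugate. Hence the enveloping ring $R = R_V(L)$ is a single copy of $M_d(F)$ acting diagonally, and its center $Z \cong F$ commutes with $C$ (since $C$ centralises $L$ and hence $R$) and with $L$ (since $Z$ is central in $R$). So $Z$ commutes with all of $G = LC$, and the action is $F$-linear with no gluing, no semilinearity, no rigidity argument for $\operatorname{Aut}(F)$. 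Your descent approach would eventually work, but the paper bypasses it entirely.

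\textbf{On the decomposition of $G$.} Here is the gap. Poizat's theorem (Fact~\ref{poizat}) applies only to \emph{simple} subgroups of $\operatorname{GL}_n(F)$; it does not let you conclude that $G$ itself is an algebraic (Zariski-closed) subgroup, and the paper explicitly warns that one cannot claim this. So you cannot simply say ``$G$ is reductive, hence $G = Z(G)^\circ \ast [G,G]$ with $[G,G]$ semisimple.'' The paper instead works through the generalised Fitting subgroup: it shows $F^\circ(G)$ has Zariski closure a nilpotent group with trivial unipotent radical, hence a torus (Fact~\ref{humphreys-nilpotent}), hence central by Fact~\ref{torusautom}; it applies Poizat only to the quasisimple components $M_i$ of the layer $E(G)$; and finally it uses $C_G(F^{*\circ}(G)) \leqslant F^*(G)$ (Fact~\ref{genfrat}) to force $G = F^\circ(G) \ast E(G)$. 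Your sketch mentions ``applying it to the layer'' in passing but then reverts to treating $G$ as algebraic; without the $F^*$ argument you have no control over what lies outside $E(G) \ast Z(G)^\circ$.
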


\begin{proof}
	By Fact~\ref{automs}, simple algebraic groups over algebraically closed fields do not allow any non-trivial definable and connected groups of external automorphisms, hence $G = L \ast C_G(L)$. Set $C= C_G(L)$.

By Clifford's Theorem (Fact~\ref{clifford}), $V$ is a direct sum of finitely many irreducible $G$-modules
\[
V =  V_1\oplus \cdots \oplus V_l,
\]
where  $V_i$'s are  irreducible $L$-modules which are conjugate by elements from $G$, and therefore by elements of $C$. Since the groups $L$ and $C$ commute elementwise, subgroups $V_i$ are isomorphic as $L$-modules.

By the above Linearisation Theorem (Fact~\ref{linearisation}), each subgroup $V_i$ has a structure of a vector space over $F$ preserved by the action of $L$ and the enveloping ring $R_i=\mathop{{\rm Env}}_{V_i}(G)$.

 Since all $V_i$'s are isomorphic $L$-modules, their enveloping rings are equal when seen as subrings of $\mathop{{\rm End}}_{V}(G)$.
Hence for all $i$, $R_i = \mathop{{\rm Env}}_{V}(G)$. Let us denote the latter ring by $R$. It is definable by the Linearisation Theorem and isomorphic to the full matrix algebra $M_{d\times d}(F)$, where $d = \dim_F(V_i)$. The center $Z= Z(R)$ could be identified with the field $F$. Since $R$ commutes elementwise with $C$, the field $Z$ also commutes elementwise with $C$ and with the entire group $G$, which makes the action of $G$ on $V$ linear over $Z$ and $F$.

Now we can take care of the structure of the group $G$, using the fact that $G$ is a definable subgroup (but we cannot claim that $G$ is Zariski closed -- see the discussion of this delicate issue in \cite{avb}) of $H = \mathop{\rm GL}_F(V)$.

The group $G$ acts on $V$ faithfully and irreducibly, and for that  reason the unipotent radical of $G$ is trivial; {that is,} $R_u(G)=1$. Consider the generalised Fitting subgroup $F^*(G)= F(G)E(G)$. Notice that, by definition, $L \lhd E(G)$. Let $T$ be the Zariski closure of $F^\circ(G)$, then $T$ is a nilpotent normal subgroup of $G$, and hence the unipotent radical  $R_u(T)$ is also normal in $G$. Thus, $R_u(T) = 1$. Now by Fact~\ref{humphreys-nilpotent}, $T$ is a (decent) torus, and  hence it does not  allow a non-trivial action by a connected group by Fact~\ref{torusautom}.  Thus, $T\leqslant C_H(G)$, and therefore $T\leqslant Z(G)$ and $F^\circ(G) \leqslant Z(G)$.

Now we turn our attention to $E$. We know that $L \lhd E$ and, by properties of $E$,
\[
E = L \ast M_1 \ast \cdots \ast M_k
\]
where $M_i$'s are definable quasisimple normal subgroups  in $G$ (since $G$ is connected). By Fact~\ref{poizat}, all $M_i$'s are simple algebraic groups over $F$.  For convenience, we denote $M_0 = L$, so that
\[
E(G) = M_0 \ast M_1 \ast \cdots \ast M_k.
\]
By the same argument that we applied to $L$ at the beginning of this proof, for all $i = 0,1,\dots, k$,
\[
G= M_i \ast C_G(M_i),
\]
and it follows that $G = E \ast C_G(E)$.

The group $G$ centralises $F^\circ \leqslant Z(G)$, therefore $C_G(E)$ centralises $F^\circ E = F^{*\circ}(G)$.
But $C_G (F^{*\circ}(G)) \leqslant F^{*}(G)$  by Fact~\ref{genfrat}. Hence $G =FE= F^\circ E$, which completes the proof.
\end{proof}

\section{Proof of Theorem \ref{main}}

To be able to prove the main theorem of this work (Theorem~\ref{main}) we have to replace the condition ``$V$ is an elementary abelian $p$-group, where $p$ is odd" with ``$V$ is a solvable group" in the assumptions of Fact~\ref{elementaryabelianV}.

The following simple but useful observations will be used in the sequel without further reference.

\begin{lemma} Assume that $G$ is a group of \fmr acting definably and generically $m$-transitively on a {connected} group $V$ of \fmrd. Then following occur.\\
	{\rm (a)} $\rk(G)\geqslant m\rk(V)$.\\
	{\rm (b)} If $G$ fixes a proper definable normal subgroup $W<V$ setwise, then the action of $G$ on $V/W$ is also generically $m$-transitive.
\end{lemma}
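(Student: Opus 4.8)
The plan is to prove both parts by unwinding the definition of generic $m$-transitivity. For part (a), I would argue via the orbit map on $V^m$. By hypothesis, $G$ acts transitively on a generic subset $\Omega \subseteq V^m$, so $\rk(\Omega) = \rk(V^m) = m\rk(V)$ since $V$ is connected (hence $\rk(V^m)=m\rk(V)$). Pick a point $\bar v = (v_1,\dots,v_m)\in\Omega$ of maximal rank over the relevant parameters; the orbit map $g\mapsto g\cdot\bar v$ is a definable surjection from $G$ onto the $G$-orbit of $\bar v$, which contains the generic subset $\Omega$ of $V^m$. By the basic rank inequality for definable surjections (fibres have rank $\geqslant \rk(G)-\rk(\text{image})$, and more directly $\rk(G) \geqslant \rk(G\cdot\bar v) \geqslant \rk(\Omega) = m\rk(V)$), we conclude $\rk(G)\geqslant m\rk(V)$. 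The only subtlety is ensuring $\rk(V^m)=m\rk(V)$, which holds because $V$ is connected so its Morley degree is $1$ and rank is additive on products.

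For part (b), suppose $G$ fixes a proper definable normal subgroup $W<V$ setwise. Let $\pi\colon V\to V/W$ be the quotient map, which is definable, and let $\pi^{(m)}\colon V^m\to (V/W)^m$ be its $m$-fold power; both are definable $G$-equivariant surjections (equivariance uses that $W$ is $G$-invariant). Let $\Omega\subseteq V^m$ be a generic $G$-invariant subset on which $G$ acts transitively. I would like to push $\Omega$ forward to a generic subset of $(V/W)^m$ on which $G$ is transitive. The image $\pi^{(m)}(\Omega)$ is a definable $G$-invariant subset of $(V/W)^m$, and $G$ acts transitively on it because the continuous image of a single $G$-orbit is a single $G$-orbit. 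It remains to check genericity: I need $\rk\big((V/W)^m\setminus\pi^{(m)}(\Omega)\big)<\rk\big((V/W)^m\big)$. This follows from the fact that $\pi^{(m)}$ is a surjective homomorphism of connected groups, so it maps generic sets to generic sets — concretely, the preimage under $\pi^{(m)}$ of any non-generic subset of $(V/W)^m$ is non-generic in $V^m$ (fibre rank is constant, equal to $m\rk(W)$), hence cannot contain $\Omega$; equivalently $\pi^{(m)}(\Omega)$ must be generic. One should also note $V/W$ is connected (a quotient of a connected group), so rank is again additive on its powers and the genericity bookkeeping goes through.

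The main obstacle, such as it is, is purely the rank bookkeeping in part (b): making precise that a surjective definable homomorphism between connected groups of finite Morley rank carries generic sets to generic sets and reflects non-generic sets. This is a standard consequence of the additivity of rank over definable surjections with equirank fibres (here the fibres of $\pi^{(m)}$ are all cosets of $W^m$, of constant rank $m\rk(W)$), together with $\rk(V^m)=m\rk(V)$ and $\rk((V/W)^m)=m\rk(V/W)=m(\rk(V)-\rk(W))$. Once this is in hand, both statements are immediate, and since they are flagged as "simple but useful observations," I expect the intended proof to dispatch them in a few lines with references to the standard rank-computation lemmas (e.g. in \cite{bn}).
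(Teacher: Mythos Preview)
Your proposal is correct and follows exactly the same route as the paper: for (a) you compare $\rk(G)$ with the rank of the generic orbit in $V^m$, and for (b) you push the generic orbit forward along the $G$-equivariant quotient map $V^m\to (V/W)^m$ and check that the image is again a single generic orbit. The paper dispatches both parts in two lines, leaving the rank bookkeeping (additivity over equirank fibres, connectedness of $V/W$) implicit; your write-up simply spells out those details.
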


\begin{proof} (a) Let $A$ be the generic orbit in $V^m$, then clearly $\rk(G)\geqslant \rk(A)=\rk(V^m)=m\rk(V)$.\\
	(b) If $G$ fixes $W$, then the projection of the generic orbit in $V^m$ into $(V/W)^m$ is also a generic orbit for the action of $G$.
\end{proof}

 First, we treat the case where $V$ is a  divisible abelian group.

\begin{lemma} Let $G$ be a connected group of \fmr acting definably, faithfully and generically $m$-transitively on a  divisible abelian {connected} group $V$ of \fmrd, where $m\geqslant \rk(V)$. Then $m=\rk(V)$ and the action  {of the group $G$ on $V$} is equivalent to the natural action  {of $\operatorname{GL}_m(F)$ on $F^m$} for some \acf $F$ of characteristic $0$. \label{divisibleV}
\end{lemma}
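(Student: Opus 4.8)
The plan is to reduce to the classical Loveys–Wagner linearisation (Fact~\ref{loveyswagner}) and then bootstrap up to the full general linear group via the inductive structure of the action. First I would check that the action of $G$ on $V$ is $G$-minimal (no proper infinite definable subgroup of $V$ is $G$-invariant): if $W < V$ were such a subgroup, then $V/W$ is a proper quotient on which $G$ acts generically $m$-transitively (by part (b) of the preceding Lemma), so $\rk(V/W) \geqslant \rk(G)/m \geqslant \rk(V)$ is impossible unless $\rk(W)=0$; so $W$ is finite, hence trivial since $V$ is connected and torsion-free (divisible connected of positive rank). Actually I should be slightly careful: divisible abelian connected groups can have torsion (decent tori), so I would invoke Macintyre's theorem (Fact~\ref{macintyre}) — here $V$ is divisible, so $V = D$ is already divisible, and its torsion subgroup is a $G$-invariant subgroup; combining with $G$-minimality forces the torsion to be trivial or all of $V$, and the latter would make $V$ a divisible abelian torsion group, on which I would argue (using Fact~\ref{torusautom}, decent tori admit no nontrivial connected definable automorphism group, after checking $V$ is a decent torus) that $G$ acts trivially — contradicting faithfulness since $G$ is infinite (as $\rk(G) \geqslant m\rk(V) > 0$). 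So $V$ is torsion-free divisible, $G$-minimal, and Fact~\ref{loveyswagner} applies: there is an \acf $F$ of characteristic $0$ with $V$ an $F$-vector space and $G$ embedded definably in $\operatorname{GL}(V) = \operatorname{GL}_n(F)$, where $n = \dim_F V$. Note $\rk(V) = n$ here.

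Next I would run an induction on $m$ (equivalently on $n$), the heart of which is a one-point-stabiliser argument. Fix a generic tuple and let $v \in V$ be a generic point; the stabiliser $G_v$ acts on $V$ fixing $v$, and — since the action is generically $m$-transitive — $G_v$ acts generically $(m-1)$-transitively on a quotient of the relevant configuration space. The standard move (as in \cite{bbnotsharp} and its predecessors) is: $G_v$ normalises the line $Fv$, acts on $V/Fv$, and the induced action is generically $(m-1)$-transitive on $V/Fv$, a vector space of dimension $n-1$ with $m - 1 \geqslant n - 1$. By the inductive hypothesis applied to (the connected component of) the image of $G_v$ in $\operatorname{GL}(V/Fv)$, we get $m - 1 = n - 1$, so $m = n = \rk(V)$, and the image is $\operatorname{GL}_{n-1}(F)$ acting naturally. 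Pulling this back and combining with the scalar action on $Fv$ recovered from the $1$-transitivity (genericity gives a full generic orbit on $V\setminus\{0\}$, forcing $F^*$ to act), one identifies a parabolic-type subgroup of $G$ and then argues $G = \operatorname{GL}_n(F)$: the subgroup generated by these stabiliser images is all of $\operatorname{GL}_n(F)$, and one checks $G$ cannot be strictly larger than $\operatorname{GL}_n(F)$ inside $\operatorname{GL}_n(F)$ — which is automatic since $G \leqslant \operatorname{GL}(V)$ already. The base case $m = n = 1$ is exactly Fact~\ref{strmin} (or Fact~\ref{rank1groups} combined with the strongly minimal linearisation): a connected rank-$1$ group acting faithfully on a strongly minimal torsion-free divisible group is $F^*$ acting by multiplication.

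I expect the main obstacle to be the inductive step's bookkeeping: making precise that the one-point stabiliser $G_v$ (or its connected component, after possibly passing to a definable closure) genuinely acts generically $(m-1)$-transitively on $V/Fv$ with a \emph{faithful} action, so that the inductive hypothesis applies cleanly. Faithfulness on the quotient is the delicate point — the kernel of $G_v \to \operatorname{GL}(V/Fv)$ consists of transvection-like elements fixing $v$ and acting trivially mod $Fv$, and one must show this kernel is trivial (or absorb it), which typically uses that $G$ acts faithfully on all of $V$ together with a dimension count $\rk(G_v) \geqslant (m-1)\rk(V/Fv) + (\text{scalar part})$ saturating the bound from part (a) of the Lemma. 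A secondary subtlety is ensuring connectivity hypotheses are maintained when passing to stabilisers; I would handle this by working with $G_v^\circ$ throughout and noting at the end (via Fact~\ref{nonconnected}) that once the natural $\operatorname{GL}_m(F)$-action is identified, there is no room for a larger group, so no disconnectedness was lost. Finally, the characteristic-$0$ conclusion is free from Loveys–Wagner, and $V$ abelian is given, so no extra work is needed there.
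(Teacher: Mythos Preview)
Your $G$-minimality argument contains a wrong-way inequality: from generic $m$-transitivity of $G$ on $V/W$ one gets $\rk(G) \geqslant m\,\rk(V/W)$, i.e.\ $\rk(V/W) \leqslant \rk(G)/m$, not the reverse; that direction does not force $W$ to be finite. The paper handles this by running the \emph{outer} induction on $\rk(V)$ already at this step: if $W$ is infinite then $\rk(V/W) < \rk(V)$, and the inductive hypothesis (applied to $G/C_G(V/W)$ acting on the divisible abelian quotient) yields $\rk(V/W) = m \geqslant \rk(V)$, a contradiction. Your detour through torsion and decent tori is then unnecessary: Fact~\ref{loveyswagner} only requires $V$ divisible abelian and $G$-minimal.

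After linearisation your route and the paper's diverge substantially. You propose a point-stabiliser induction through $V/Fv$, with the headaches you yourself flag (faithfulness of $G_v$ on the quotient, connectedness of stabilisers, reassembling a parabolic). The paper instead finishes by a bare rank count: once $G \leqslant \operatorname{GL}_t(F)$ with $\dim_F V = t$ and $\rk(F)=k$, the chain
\[
m\,t\,k \;=\; m\,\rk(V) \;\leqslant\; \rk(G) \;\leqslant\; \rk\operatorname{GL}_t(F) \;=\; t^2 k,
\]
together with $tk = \rk(V) \leqslant m$, forces $k=1$, $t=m$, and $\rk(G)=m^2=\rk\operatorname{GL}_m(F)$; connectedness then gives $G=\operatorname{GL}_m(F)$. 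Your stabiliser recursion could presumably be pushed through, but it is far heavier than what is needed: the Loveys--Wagner embedding already places $G$ inside $\operatorname{GL}(V)$, so a single dimension comparison suffices and no internal induction on stabilisers is required.
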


\begin{proof} We will use induction on $\rk(V)\geqslant 1$. When $\rk(V)=1$, the result follows from Fact~\ref{strmin}. Assume $\rk(V) \geqslant 2$. We will show that the action of $G$ on $V$ is $G$-minimal. Assume $U$ is a proper definable subgroup of $V$ fixed by $G$. Then $G$ acts on $V/U$, which is a divisible abelian group of rank at most $m$, generically $m$-transitively. By the inductive hypothesis, $\rk(V/U)=m$ and thus $\rk(V)=m$ and $\rk(U)=0$. Therefore, Fact~\ref{loveyswagner} applies and there exists an \acf $F$ of characteristic 0 such that $V$ is a vector space over $F$ of Morley rank $m$, $G$ is definably isomorphic to a subgroup $H$ of $\operatorname{GL}(V)$. Note that since $\rk(V)=m$, $\rk(G)=m^2$.
	
	As a last step, we need to show the underlying field $F$ is of Morley rank 1.
		Assume $\rk(F)=k$, and $V$ is a $t$ dimensional vector space over $F$, therefore $m=kt$. Hence $G\cong H\leqslant \operatorname{GL}(V)\cong \operatorname{GL}_t(F)$. Comparing ranks we get $m^2\leqslant t^2\rk(F)=t^2k=m^2/k$. Therefore $k=1$.
\end{proof}

 {Next, we assume that $V$ is an elementary abelian 2-group.}

\begin{proposition} \label{characteristic2} Let $G$ be a connected group of \fmr acting definably, faithfully and generically $m$-transitively on a {connected} elementary abelian $2$-group $V$ of \fmrd, where $m\geqslant \rk(V)$. Then $m=\rk(V)$ and the action of  {the group $G$ on $V$} is equivalent to the natural action  {of $\operatorname{GL}_m(F)$ on $ F^m$} for some \acf $F$ {of characteristic $2$}.
\end{proposition}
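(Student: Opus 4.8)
The plan is to argue by induction on $n=\rk(V)$. The base case $n=1$ is immediate from Fact~\ref{strmin}: then $V$ is strongly minimal, $C_G(V)=1$, and $G\ltimes V$ is connected, so $V\cong F^+$ and $G\cong F^*$ with the natural action, where $F$ is necessarily of characteristic $2$; comparing ranks ($\rk G=1\geqslant m\rk V=m$) forces $m=1$, which is the case of $\operatorname{GL}_1(F)$ on $F^1$. So from now on I take $n\geqslant 2$ and assume the theorem for smaller ranks.

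The first move in the inductive step is to reduce to an irreducible action. If $W<V$ were a proper infinite definable $G$-invariant subgroup, then $G$ modulo the kernel of its action on $V/W$ would act faithfully, connectedly and generically $m$-transitively on $V/W$, a connected elementary abelian $2$-group of rank $<n\leqslant m$; by induction this forces $\rk(V/W)=m$, a contradiction. Hence the action is $G$-minimal, so $C:=C_V(G)$ — being a proper definable $G$-invariant subgroup — is finite. If $C\neq 0$, I pass to $V/C$: the action stays faithful (an element $g$ of its kernel would give a definable homomorphism $v\mapsto gv+v$ from the connected group $V$ into the finite group $C$, hence the zero map, so $g\in C_G(V)=1$), and stays generically $m$-transitive and $G$-minimal. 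Iterating and using the descending chain condition on definable subgroups, I arrive at a quotient $V'=V/C_*$ with $C_*$ finite and $C_{V'}(G)=0$; since $G$ is connected it acts trivially on any finite subgroup, so $V'$ has no finite $G$-submodule either, and is therefore an irreducible $G$-module of rank $n$.

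Next I analyse $G$ on the irreducible module $V'$. A $2$-torus acts trivially on an elementary abelian $2$-group (Fact~\ref{toralaction}) and $C_G(V')=1$, so $G$ has no nontrivial $2$-torus; by Fact~\ref{centralproduct}, $O_2(G)=O_2^\circ(G)$ is a definable unipotent (hence bounded-exponent) normal subgroup and $G/O_2^\circ(G)=Q\ast S$ with $S$ a central product of quasisimple algebraic groups over algebraically closed fields of characteristic $2$ and $Q$ connected without involutions. If $O_2^\circ(G)\neq 1$, then $O_2^\circ(G)\ltimes V'$ is a connected solvable group of bounded exponent, hence nilpotent, so $C_{V'}(O_2^\circ(G))\neq 0$ is a proper $G$-submodule — impossible; thus $O_2^\circ(G)=1$ and $G=Q\ast S$. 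I exclude $S=1$: then $G=Q$ has no involutions, so it is not solvable (a connected solvable group acting faithfully and irreducibly on an abelian group would realise $V'$ as a one-dimensional vector space over a field $F$ with $G\leqslant F^*$, forcing $\rk G\leqslant\rk F=n<n^2\leqslant\rk G$), while a connected nonsolvable group without involutions does not act faithfully and definably on a connected abelian group of finite Morley rank. Hence $S\neq 1$, and from a quasisimple component I extract a definable normal subgroup $L\lhd G$ isomorphic to a simple algebraic group over an algebraically closed field $F$ of characteristic $2$, checking that the center of the component (of odd order in characteristic $2$, and central in $G$) acts as scalars by irreducibility and can be factored out.

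Now the Linearisation Theorem (Theorem~\ref{cor-linearisation}) applies: $V'$ is a $d$-dimensional $F$-vector space, the $G$-action is $F$-linear, and $G$ is a definable subgroup of $\operatorname{GL}_F(V')\cong\operatorname{GL}_d(F)$. A rank count closes the irreducible case: $n=\rk(V')=d\cdot\rk(F)$ and $mn\leqslant\rk(G)\leqslant\rk\operatorname{GL}_d(F)=d^2\rk(F)$ give $m\leqslant d$, while $n\leqslant m$ gives $d\cdot\rk(F)\leqslant d$, so $\rk(F)=1$ and $m=n=d$; hence $\rk(G)=d^2=\rk\operatorname{GL}_d(F)$ and, $G$ being connected, $G=\operatorname{GL}_d(F)$ acting naturally on $V'\cong F^d$. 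To transfer this back to $V$ I show $C_*=0$: the generic orbit of $G$ on $V^n$ has rank $mn=n^2=\rk(G)$, so generic stabilisers are finite; if $0\neq c\in C_*$, the $G$-equivariant bijection $(v_1,\dots,v_n)\mapsto(v_1+c,v_2,\dots,v_n)$ fixes the generic orbit, producing $g\in G$ with $gv_1=v_1+c$ and $gv_i=v_i$ for $i\geqslant 2$; reducing modulo $C_*$, the image of $g$ in $\operatorname{GL}_d(F)$ fixes the basis $\bar v_1,\dots,\bar v_n$ of $F^d$, hence $g=1$, contradicting $gv_1=v_1+c\neq v_1$. So $C_*=0$, $V=V'\cong F^m$, and the action is the natural one. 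The step I expect to be the main obstacle is the structural one on the irreducible module — extracting a definable \emph{normal simple algebraic} subgroup of $G$ over a field of characteristic $2$ — which requires both ruling out the degenerate (no-involutions) configuration and handling the fact that the even-type classification delivers only quasisimple components, whose odd-order centers must be shown to act harmlessly as scalars; controlling the finite part $C_*$ in the reduction to irreducibility is the secondary delicate point, and the rest is bookkeeping with the tools already in place.
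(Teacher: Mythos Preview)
Your overall architecture matches the paper's: induct on $n=\rk V$, establish $G$-minimality, use Fact~\ref{toralaction} and Fact~\ref{centralproduct} to get $O_2(G)=1$ and $G=Q\ast S$, feed a component into the Linearisation Theorem, and close with a rank count. Two of your deviations are benign. The iterated quotient to reach $C_{V'}(G)=0$ is unnecessary --- the paper quotients once by $C_V(G)$ and cites \cite[Lemma~I.8.1]{abc} for irreducibility (indeed, a connected $G$ centralises any finite $G$-invariant subgroup of $V/C_V(G)$, forcing it to be trivial; your appeal to ``DCC'' is not the right justification, though one step already suffices). And your proof that $C_*=0$ via the $G$-equivariant shift $(v_1,\dots,v_n)\mapsto(v_1+c,v_2,\dots,v_n)$ on the generic orbit, combined with faithfulness of $G$ on $V'$, is a correct and rather nice alternative to the paper's splitting $V=C_V(Z)\oplus[V,Z]$ for the scalar torus $Z$.

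The genuine gap is your argument that $S\neq 1$. The solvable branch is fine, but the nonsolvable branch asserts that ``a connected nonsolvable group without involutions does not act faithfully and definably on a connected abelian group of finite Morley rank.'' This is not a known theorem: it would in particular forbid any faithful representation of a simple group of degenerate type (a bad group), and nothing of the sort has been proved. The paper does not attempt any such dichotomy; it simply writes ``Since $G$ contains involutions, $S\neq 1$,'' so the task is to produce an involution in $G$ directly from the hypotheses, not to exclude the degenerate configuration by fiat. (Your side remark about factoring out the odd-order centre of a quasisimple component to obtain a \emph{simple} $L$ for Theorem~\ref{cor-linearisation} is also not quite right --- that centre acts faithfully by nonidentity scalars, so it cannot be quotiented away while keeping the action on $V'$ faithful --- but the paper glosses over the simple-versus-quasisimple distinction too, and this is not where your argument actually breaks.)
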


\begin{proof} Set $n=\rk(V)$. We will use induction on $n\geqslant 1$. If $n=1$, then Fact~\ref{strmin} applies {as above}. So assume $n\geqslant 2.$
	
First, note that a 2-torus does not have a non-trivial definable action on an elementary abelian 2-group by Fact~\ref{toralaction}, hence $G$ has no non-trivial 2-tori. Therefore, Fact~\ref{centralproduct} applies and we get that $O_2(G)$ is a connected definable unipotent subgroup in $G$. By \cite[Corollary I.8.4]{abc}, $O_2(G)\ltimes V$ is nilpotent. Hence $O_2(G)$ centralises an infinite definable subgroup $W$ of $V$. Thus, $G$ fixes $W$ setwise.
	
However, the same argument as in the proof of Lemma~\ref{divisibleV} shows that the action is $G$-minimal. This contradiction shows that $O_2(G)=1$.
	Hence, by Fact~\ref{centralproduct}, we can write $G=Q\ast S$, where $S=S_1\ast\cdots\ast S_k$ is a central product of quasisimple algebraic groups, and $Q$ is a connected group with no involutions.
Since $G$ contains involutions, $S\neq 1$.

 Set $\bar{V}=V/C_V(G)$, then by \cite[Lemma I.8.1]{abc}, $G$ acts irreducibly on $\bar{V}$. Therefore, Theorem~\ref{cor-linearisation} applies and we  conclude that there exists an \acf $F$ of characteristic 2 such that $\bar{V}$ is a vector space over $F$, $G\leqslant \operatorname{GL}(\bar{V})$ is definable and the action is $F$-linear.
		Since $\rk(G)\geqslant n^2$ and $\rk(\operatorname{GL}(\bar{V}))=n^2$, we get the equality and also conclude that $\rk F=1$.

Next, we will show that $C_V(G)$ is trivial. Let $Z$ be the group of scalars in $\operatorname{GL}(\bar{V})$ then $V=C_V(Z)\oplus [V,Z]$ by  \cite[Corollary I.9.11]{abc}.
			Obviously, $Z$ acts on $\bar{V}$ without fixed points therefore $C_V(Z) = C_V(G)$ and $V=C_V(G)\oplus [V,Z]$.
			Since $C_V(G)$ is finite and $V$ is connected, $C_V(G)=\{0\}$.
	Hence, $G\cong\operatorname{GL}(V)$.
	\end{proof}

Now, we are ready to combine our previous results and treat the case, where $V$ is abelian, without any further restrictions on the structure of $V$.

\begin{proposition} Let $G$ be a connected group of \fmr acting definably, faithfully and generically $m$-transitively on an abelian group $V$ of \fmrd, where $m\geqslant \rk(V)$. Then $m=\rk(V)$ and the action  {of the group $G$ on $V$} is equivalent to the natural action  {of  $\operatorname{GL}_m(F)$ on $F^m$} for some \acf $F$.\label{abelianV}
	 \end{proposition}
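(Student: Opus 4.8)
The plan is to argue by induction on $n=\rk(V)$, reducing the general abelian situation to the two cases already treated: $V$ divisible (Lemma~\ref{divisibleV}) and $V$ elementary abelian (Fact~\ref{elementaryabelianV} for odd $p$, Proposition~\ref{characteristic2} for $p=2$).

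The heart of the argument is to establish that the action of $G$ on $V$ is $G$-minimal, that is, $V$ has no proper infinite definable $G$-invariant subgroup. When $n=1$ this is automatic, since a proper definable subgroup of a connected rank-$1$ group is finite. When $n\geqslant 2$, suppose $U<V$ is a proper infinite definable $G$-invariant subgroup. Then $G$ acts generically $m$-transitively on the abelian group $V/U$, which (as $V$ is connected and $U$ is proper) is infinite with $\rk(V/U)\leqslant n-1<m$. Replacing $G$ by its quotient by the definable normal kernel of this action --- still a connected group of \fmr --- the inductive hypothesis yields $\rk(V/U)=m$, a contradiction. Hence $V$ is $G$-minimal.

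Now invoke Macintyre's theorem (Fact~\ref{macintyre}) to write $V=B\oplus D$ with $B$ of bounded exponent and $D$ divisible; since $V$ is connected, both $B$ and $D$ are connected, and both can be taken definable and $G$-invariant ($D$ is the characteristic maximal divisible subgroup, and $B$ can be realised as $V[N]^{\circ}$ for a suitable $N$ annihilating it). Splitting $B$ into its finitely many primary components $B=B_{p_1}\oplus\cdots\oplus B_{p_s}$ --- again definable, $G$-invariant and connected --- $G$-minimality forces each of $D,B_{p_1},\dots,B_{p_s}$ to be finite, hence trivial, with the single exception of one summand, which is all of $V$. If $V=D$, Lemma~\ref{divisibleV} finishes the proof. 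Otherwise $V=B_p$ for one prime $p$; then $pV$ is a connected definable $G$-invariant subgroup, proper because $V$ is a nontrivial group of bounded exponent, so $G$-minimality gives $pV=0$ and $V$ is elementary abelian of exponent $p$. Fact~\ref{elementaryabelianV} (if $p$ is odd) or Proposition~\ref{characteristic2} (if $p=2$) then yields $m=\rk(V)$, an \acf $F$, $V\cong F^{m}$, $G\cong\operatorname{GL}_m(F)$, and equivalence with the natural action, completing the induction.

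The step I expect to require the most care is the $G$-minimality argument: the inductive hypothesis demands a \emph{faithful} action, so one cannot apply it to $G$ acting on $V/U$ directly but must first quotient out the kernel and check that connectedness, definability, finiteness of Morley rank, and generic $m$-transitivity all survive. The remaining delicate point is the bounded-exponent reduction --- that $pV$ is proper and, being connected, therefore trivial --- which is what upgrades ``bounded exponent'' to ``elementary abelian'' so that the known results apply. Everything else is routine bookkeeping about characteristic subgroups together with the fact that a finite connected group is trivial.
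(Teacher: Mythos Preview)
Your proposal is correct and follows essentially the same approach as the paper: induction on $\rk(V)$, Macintyre's decomposition, and reduction to the divisible and elementary abelian cases already handled. The only organisational difference is that you isolate $G$-minimality as a lemma and use the primary decomposition plus $pV=0$ in the bounded-exponent case, whereas the paper argues ad hoc with the subgroups $V_d=\{v:dv=0\}$; the content is the same.
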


\begin{proof} We will use induction on $\rk(V)\geqslant 1$. {When $\rk(V)=1$, Fact~\ref{strmin} applies. Assume $\rk(V)\geqslant 2$.}

By Macintyre's Theorem (Fact~\ref{macintyre}), we can write $V=D\oplus B$, where $D$ is a definable divisible subgroup and $B$ is of bounded exponent. First we will show that $V=D$ or $V=B$. If $V\neq D$ and $V\neq B$, then $G$ acts $m$-transitively on $V/D$, and hence by the induction hypothesis, $\rk(V/D)=m$ and thus $D=0$.
Contradiction shows $V=B$ or $V=D$.
	
		If $V=B$, then let $k$ denote the exponent of $V$. Note that for every divisor $d$ of $k$, $V_d=\{v\in V\mid dv=0\}$ is a definable subgroup of $V$ fixed by $G$. If $V_d$ is a non-trivial proper subgroup then $G$ acts generically $m$-transitively on $V/V_d$, hence each $V_d$ is finite, unless $V_d=V$, that is $k=d$. Therefore, if $k$ has a proper prime divisor $p$ then the homomorphism $V\to V$, $v\mapsto pv$ has a finite kernel, namely $V_p$, and a finite image, namely $V_{k/p}$; hence  $V$ is finite. Contradiction shows that $k=p$ is prime; that is $V$ is an elementary abelian $p$-group. Now the result follows from Fact~\ref{elementaryabelianV} for odd $p$, and from Proposition~\ref{characteristic2} for $p=2$.
	
	If $V=D$, then the result immediately follows from Lemma~\ref{divisibleV}.
		\end{proof}

 {Finally, we are ready to discuss the case where $V$ is solvable.}

\begin{proposition} {Let $G$ be a connected group of \fmr acting definably, faithfully and generically $m$-transitively on a solvable group $V$ of \fmrd, where $m\geqslant \rk(V)$. Then $m=\rk(V)$ and the action  {of the group $G$ on $V$} is equivalent to the natural action  {of $\operatorname{GL}_m(F)$ on $ F^m$} for some \acf $F$.} \label{solvableV}
\end{proposition}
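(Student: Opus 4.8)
The plan is to reduce the solvable case to the abelian one, which is already settled in Proposition~\ref{abelianV}. We may assume $V\neq 1$; then, $V$ being connected and solvable, $W:=[V,V]$ is a \emph{proper} definable connected subgroup of $V$. As $W$ is characteristic in $V$ and $G$ acts on $V$ by definable automorphisms, $G$ fixes $W$ setwise, so by part~(b) of the lemma at the start of this section the induced action of $G$ on the connected abelian group $V/W$ is again generically $m$-transitive, with $\rk(V/W)\leqslant\rk(V)\leqslant m$; note also $V/W\neq 1$ since $V$ is solvable and nontrivial.

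This induced action of $G$ on $V/W$ need not be faithful, so I would let $K\lhd G$ be its kernel---a definable normal subgroup---and pass to $\bar G:=G/K$, which is a connected group of finite Morley rank. Since $K$ acts trivially on $V/W$, the $G$-orbits and $\bar G$-orbits on $(V/W)^m$ coincide; hence $\bar G$ acts definably, faithfully and generically $m$-transitively on $V/W$. Proposition~\ref{abelianV} applies to $\bar G\curvearrowright V/W$ and yields $\rk(V/W)=m$. Combined with $\rk(V)=\rk(W)+\rk(V/W)\leqslant m$, this forces $\rk(W)=0$, and as $W$ is connected, $W=1$. (Connectedness of $W$ can be sidestepped: a finite normal subgroup of a connected group is central, so $W\leqslant Z(V)$, and then for each $x\in V$ the homomorphism $y\mapsto[x,y]$ from $V$ to $W$ has kernel $C_V(x)$ of finite index, so $C_V(x)=V$; thus $V=Z(V)$.) In either case $V$ is abelian.

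It then suffices to apply Proposition~\ref{abelianV} directly to the original action $G\curvearrowright V$: this gives $m=\rk(V)$, an \acf $F$ with $G\cong\operatorname{GL}_m(F)$, and equivalence with the natural action $\operatorname{GL}_m(F)\curvearrowright F^m$, completing the proof. In this argument all of the real content resides in the earlier propositions; the single step that is not completely routine is the descent to $\bar G=G/K$, made necessary because factoring out $W$ can destroy faithfulness---everything else is a short rank computation.
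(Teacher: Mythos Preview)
Your proof is correct and follows essentially the same route as the paper: quotient by $W=[V,V]$, use generic $m$-transitivity on the abelian quotient to force $\rk(V/W)=m$ and hence $W=1$, then apply Proposition~\ref{abelianV}. Two small differences worth noting: the paper frames this as an induction on $\rk(V)$ and invokes the inductive hypothesis on the ``solvable'' quotient $V/[V,V]$, whereas you observe directly that this quotient is abelian and invoke Proposition~\ref{abelianV}, which is cleaner; and you are more careful than the paper about faithfulness, explicitly passing to $\bar G=G/K$ where the paper simply writes ``(faithfully and)'' in parentheses.
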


\begin{proof} We will use induction on $\rk(V)\geqslant 1$.
When $\rk(V)=1$, Fact~\ref{strmin} applies. Assume $\rk(V)\geqslant 2$. Then $[V,V]$ is a proper definable connected $G$-invariant subgroup of $V$. Hence, $G$ acts (faithfully and)  generically $m$-transitively on $V/[V,V]$ which is a solvable group of rank at most $\rk(V)$. {Hence,} by the induction hypothesis, $\rk(V/[V,V])=m$ and thus $\rk(V)=m$ and $[V,V]=0$. Therefore, $V$ is abelian, and the result follows from Proposition \ref{abelianV}.
\end{proof}

{The final step to} complete the proof of Theorem~\ref{main} is to remove the connectedness assumption on $G$ from the statement of the above {proposition}. If $G$ is not connected then under the assumptions of Theorem~\ref{main}, $G^\circ$ satisfies all the conditions of {Proposition}~\ref{solvableV} by  \cite[Lemma 4.10]{altwis}. Hence we can conclude that $G^\circ=\operatorname{GL}_m(F)$ and $V=F^m$, for some \acf $F$.
Finally, by Fact~\ref{nonconnected}, we obtain $G=G^\circ$. \hfill $\Box$

This completes the proof of Theorem~\ref{main}.

\section{Primitive Groups of Affine Type}
\label{primitive}

In this section, assume that $G$ is a group of \fmr acting on an infinite set $X$ which is also of \fmr definably and faithfully.

Following \cite{borche}, we will call a definable action $G\curvearrowright X$ {\em definably primitive}, if the only
definable $G$-invariant equivalence relations on $X$ are the trivial relations.

{An important theorem by Macpherson and Pillay \cite[Theorem 1.1]{macpill} divides definably primitive groups of \fmr in four different types of quite different nature.} We will only mention two of these types in this work.

The first type of definably primitive groups that we will consider are  those  groups $G$ which contain a definable normal abelian regular subgroup. In this case, we will say $G$ is of {\em affine type}. We will denote the  definable abelian  normal  regular subgroup of $G$ by $S$ from now on. Moreover, Macpherson and Pillay showed the following.

\begin{fact} \cite{macpill} Assume that $G$ acts on $X$ definably primitively, and is of affine type. Then $S$ is either an elementary abelian p-group for some prime $p$, or is torsion-free divisible. Moreover,  $G=S\rtimes \stab(x)$ for any $x\in X$, and $G\curvearrowright X$ is equivalent to $G \curvearrowright S$. \label{macpillaffine}
\end{fact}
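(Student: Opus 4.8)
The plan is to derive everything from the hypothesis that $S$ is a \emph{regular} --- that is, transitive and free --- definable normal subgroup of $G$, invoking definable primitivity only at the last stage, for the structure of $S$. Fix $x \in X$. Since $S$ is transitive, any $g \in G$ satisfies $g\cdot x = s\cdot x$ for some $s \in S$, whence $s^{-1}g \in \stab(x)$ and $G = S\cdot\stab(x)$; since $S$ is free, $S \cap \stab(x) = 1$; and $S \lhd G$ by hypothesis. Hence $G = S \rtimes \stab(x)$ for every $x \in X$. The orbit map $\phi\colon S \to X$, $\phi(s) = s\cdot x$, is then a definable bijection --- injective because $S$ is free, surjective because $S$ is transitive, and definable because its graph $\{(s,y) : s\cdot x = y\}$ is.

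Next I would check that $\phi$ intertwines $G \curvearrowright X$ with an affine action of $G$ on $S$. Write $g = th$ with $t \in S$, $h \in \stab(x)$. Using $h\cdot x = x$ and $hs = ({}^{h}s)h$ one gets $g\cdot\phi(s) = \phi(t\cdot {}^{h}s)$; transporting the action along $\phi$ therefore yields the action of $S \rtimes \stab(x)$ on $S$ in which $S$ acts by translations and $\stab(x)$ by conjugation. So $G \curvearrowright X$ is equivalent to $G \curvearrowright S$, as claimed.

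The structural dichotomy for $S$ is where primitivity enters. The key remark is: if $S_0 \leqslant S$ is definable and normal in $G$, then $\phi$ maps the partition of $S$ into $S_0$-cosets to the partition of $X$ into $S_0$-orbits, so ``lying in the same $S_0$-orbit'' is a definable $G$-invariant equivalence relation on $X$. By definable primitivity it is one of the two trivial relations, and since $S_0 \leqslant S$ with $S$ regular, ``equality'' gives $S_0 = 1$ while ``$X\times X$'' gives $S_0 = S$. Apply this to $S_0 = S[p] := \{s \in S : ps = 0\}$ and to $S_0 = pS$, for each prime $p$: these are subgroups (as $S$ is abelian), definable (the kernel and the image of $s \mapsto ps$), and characteristic, hence normal in $G$. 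If $S[p] = S$ for some prime $p$, then $S$ is an elementary abelian $p$-group. Otherwise $S[p] = 1$ for all primes $p$; since any nontrivial element of finite order would have a power of prime order, $S$ is then torsion-free, and moreover $pS \neq 1$ for every $p$ (else $S = S[p] = 1$, impossible as $X$, and so $S$, is infinite), so $pS = S$ for all $p$ and $S$ is divisible. This is exactly the stated dichotomy.

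I expect the only real content to be the third paragraph: choosing the right definable characteristic subgroups of $S$ and correctly reading ``definably primitive'' as ``every definable $G$-normal subgroup of $S$ is $1$ or $S$'', where regularity is used to match the two trivial equivalence relations with $S_0 = 1$ and $S_0 = S$. The identity $G = S \rtimes \stab(x)$ and the equivalence $G \curvearrowright X \sim G \curvearrowright S$ are formal consequences of $S$ being a regular normal subgroup. The routine point to keep in view throughout is definability --- of $\phi$, of the $S_0$-orbit relation, and of $S[p]$ and $pS$ --- but each is immediate, being built from definable maps and subgroups.
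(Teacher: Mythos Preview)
The paper does not prove this statement: it is quoted as a fact from Macpherson and Pillay \cite{macpill} with no argument given. So there is nothing in the paper to compare your proof against.

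That said, your argument is correct and is essentially the standard one. The first two paragraphs are, as you note, formal consequences of $S$ being a definable regular normal subgroup; the third paragraph is where the work is, and your reasoning there is sound. The key translation --- that definable primitivity of $G\curvearrowright X$ forces every definable $G$-normal subgroup $S_0\leqslant S$ to be $1$ or $S$, via the $S_0$-orbit equivalence relation --- is exactly right, and your use of regularity to identify the two trivial relations with $S_0=1$ and $S_0=S$ is clean. The choice of $S[p]$ and $pS$ as the test subgroups is the natural one, and the case analysis (some $S[p]=S$ gives elementary abelian; all $S[p]=1$ gives torsion-free, then $pS\neq 1$ forces $pS=S$, hence divisible) is complete. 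One cosmetic point: you switch from multiplicative to additive notation for $S$ midway through the third paragraph; this is harmless but worth tidying.
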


Now we apply our main theorem to definably primitive groups of affine type. Below, we denote the affine group of dimension $n$ over $K$ by $\operatorname{AGL}_n(K)$, which is isomorphic to $\operatorname{GL}_n(K)\ltimes K^n$, and the affine space of dimension $n$ by ${\mathbb A}_n(K)$.

\begin{corollary3} Assume that $G$ acts on $X$ definably primitively, and generically $(n+1)$-transitively. If the action is of affine type and $\rk(X)\leqslant n$, then $\rk(X)=n$, and $G\curvearrowright X$ is equivalent to the natural action of the affine group $\operatorname{AGL}_n(K) \curvearrowright {\mathbb A}_n(K)$ for some \acf $K$.
\end{corollary3}

\begin{proof} Let $S$ be the definable abelian  normal  regular subgroup of $G$. By Fact~\ref{macpillaffine}, we can work with $S$ instead of $X$. The action of $G/S\cong \stab(x)$ on $S$ is generically $n$-transitive, and respects the group structure on $S$.  Now the result follows from Theorem~\ref{main}.
	\end{proof}

Note that there are three infinite families of group actions which are definably primitive of affine type and generically multiply transitive; namely the natural actions of the general linear group, the affine general linear group and the projective general linear group. We claim that these are the only actions with named properties.
	
We state our formal conjecture below.

\begin{conjecture} Assume that $G$ is a connected group of \fmr acting  definably primitively of affine type, and generically $t$-transitively on $X$, where $\rk(X)=n\leqslant t$. Then the action is generically {\em sharply} $t$-transitive (in particular $\rk(G)=nt$), and one of the following occurs.
	\begin{enumerate}
		\item $\rk(G)=n^2$, $t=n$, and the action is equivalent to $\operatorname{GL}_n(K) \curvearrowright K^n\setminus\{0\}$ for some \acf $K$.

		\item $\rk(G)=n(n+1)$, $t=n+1$, and the action is equivalent to $\operatorname{AGL}_n(K) \curvearrowright {\mathbb A}_n(K)$ for some \acf $K$.
		\item $\rk(G)=n(n+2)$, $t=n+2$, and the action is equivalent to $\operatorname{PGL}_{n+1}(K) \curvearrowright {\mathbb P}_n(K)$ for some \acf $K$.
		\end{enumerate}
	\end{conjecture}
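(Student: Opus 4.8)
\medskip

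\noindent\textbf{A proof proposal.} The plan is to run the Macpherson--Pillay structure theorem \cite{macpill} through Theorem~\ref{main}, handling the affine type as far as that theorem reaches and identifying the remaining cases as the main obstacle.

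Assume first that the action is of affine type. By Fact~\ref{macpillaffine} there is a definable abelian normal regular subgroup $S\lhd G$ with $G=S\rtimes H$, $H=\stab(x)$, and $G\curvearrowright X$ equivalent to $G\curvearrowright S$; in particular $\rk(S)=\rk(X)=n$. One checks that $S$ is connected: otherwise $1\neq S^{\circ}\subsetneq S$, and $S^{\circ}\rtimes H$ is a definable subgroup strictly between $\stab(x)$ and $G$, contradicting definable primitivity. As in the proof of Corollary~3, the subgroup $H=\stab(x)$ fixes the identity of $S$ and acts on $S$ \emph{by automorphisms}, definably, faithfully and generically $(t-1)$-transitively. (Note that $G$ itself does not act on $S$ by automorphisms --- it contains the translations --- so Theorem~\ref{main} cannot be applied to $G\curvearrowright S$ directly.)

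If $t\geqslant n+1$, then $t-1\geqslant n=\rk(S)$, and Theorem~\ref{main} applies to $H\curvearrowright S$: for some \acf $F$ one gets $H\cong\operatorname{GL}_n(F)$, with $t-1=n$ (hence $t=n+1$), and $S$ definably isomorphic to $F^n$ on which $H$ acts naturally. Since the conjugation action of $H$ on $S$ is precisely this action, $G=S\rtimes H\cong F^n\rtimes\operatorname{GL}_n(F)=\operatorname{AGL}_n(F)$, and $G\curvearrowright X$ is equivalent to the natural action $\operatorname{AGL}_n(F)\curvearrowright{\mathbb A}_n(F)$. This action is generically sharply $(n+1)$-transitive, so the action is generically sharply $t$-transitive and $\rk(G)=n(n+1)=nt$ --- outcome~(2).

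What remains is the genuine obstruction. Within the affine type the borderline case $t=n$ is not reached by Theorem~\ref{main}, since there $H$ is only generically $(n-1)$-transitive on the rank-$n$ group $S$, one step below the threshold $m\geqslant\rk(V)$; here one would have to exploit definable primitivity directly --- maximality of $H=\stab(x)$ in $G$ forces the automorphism action of $H$ on $S$ to admit no proper nontrivial definable invariant subgroup (otherwise $W\rtimes H$ would be an intermediate definable subgroup), so one is reduced to an irreducible action when $S$ is elementary abelian and, via Fact~\ref{loveyswagner}, to a linear action over a field of characteristic $0$ when $S$ is torsion-free divisible --- but identifying $H$ from this data lies beyond the present methods. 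The other Macpherson--Pillay types are harder still: outcome~(3), the projective action $\operatorname{PGL}_{n+1}(K)\curvearrowright{\mathbb P}_n(K)$, is of almost simple type, where $X$ carries no regular abelian normal subgroup and Theorem~\ref{main} gives no leverage, and handling it requires identification theorems for (almost) simple groups of \fmr admitting a generically $t$-transitive permutation representation with $t\geqslant\rk(X)$, together with an argument that the two remaining types cannot support such highly generically transitive actions. Classification input of this strength is not yet available in the generality needed, which is why the statement is left as a conjecture.
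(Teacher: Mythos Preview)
This statement is labelled as a \emph{conjecture} in the paper and no proof is offered there, so there is no argument of the paper's to compare your proposal against. What you have written is not a proof either, and you say as much in your final paragraph; it is an honest assessment of how far the paper's methods reach.

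That assessment is accurate. Under the affine-type hypothesis with $t\geqslant n+1$ you correctly pass to the point stabiliser $H$ acting by automorphisms on the regular abelian normal subgroup $S$, check connectedness and faithfulness, and apply Theorem~\ref{main} to force $t=n+1$ and obtain outcome~(2); this is exactly the content of Corollary~3. You are also right that the case $t=n$ lies one step below the threshold $m\geqslant\rk(V)$ of Theorem~\ref{main}, so that theorem gives no direct leverage on outcome~(1). Your observation that the projective action in outcome~(3) is of almost simple type rather than affine type is correct as well, and in fact outcome~(1) for $n\geqslant 2$ is not even definably primitive (the relation ``lie on the same line through the origin'' is a nontrivial invariant equivalence on $K^n\setminus\{0\}$); so the conjecture as literally stated has some internal tension that your remarks begin to expose. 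None of this amounts to a proof, but it is a fair diagnosis of why the statement remains a conjecture.
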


The second type of groups in the Macpherson--Pillay classification is definably primitive groups {which contain definable normal  simple regular groups}. We will say that these groups are of {\em regular type}. Here is our conjecture about them.

\begin{conjecture} A connected group of \fmr does not have a definably primitive action of regular type which is also generically $t$-transitive on a set $X$
{with $t \geqslant \rk(X)$}.
\end{conjecture}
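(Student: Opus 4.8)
The plan is to play a rank lower bound coming from generic transitivity against a rank upper bound coming from the structure of a group acting by automorphisms on a simple group. First the reduction. Let $S\trianglelefteq G$ be the definable normal simple subgroup witnessing that the action is of regular type, fix $x\in X$ and put $H=\stab(x)$. Regularity of $S$ gives $S\cap H=1$ and $SH=G$, so $G=S\rtimes H$; and (just as in the affine case, cf.\ Fact~\ref{macpillaffine} and \cite{macpill}, but here by pure group theory) $G\curvearrowright X$ is equivalent to the action of $G$ on the set $S$ in which $S$ acts by left translation and $H$ acts by conjugation. In particular $\rk(X)=\rk(S)=:n$ and $G$ acts generically $t$-transitively on $S$ with $t\geqslant n$. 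If $sh$ (with $s\in S$, $h\in H$) acts trivially on $S$, then it fixes $1$, so $s=1$, and then $h$ centralises $S$; hence faithfulness forces $C_H(S)=1$, and therefore also $C_{H^\circ}(S)=1$. Since $S$ is simple and non-abelian it has Morley rank at least $3$ (classically, connected groups of Morley rank $\leqslant 2$ are soluble; cf.\ \cite{bn}), so $n\geqslant 3$. On the other side, the generic orbit of $G$ in $X^t$ has rank $t\,\rk(X)$, so
\[
\rk(G)\ \geqslant\ t\,\rk(X)\ =\ tn\ \geqslant\ n^2 .
\]

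The key step is the matching upper bound, and for this I would first assume that $S$ is definably isomorphic to a simple algebraic group over an \acfd. Then $S\rtimes H^\circ$ is a group of \fmrd\ in which $S$ is connected, infinite and simple algebraic, $H^\circ$ is connected, and $C_{H^\circ}(S)=1$; so Fact~\ref{automs} applies and gives $H^\circ\leqslant\operatorname{Inn}(S)$, whence $\rk(H)=\rk(H^\circ)\leqslant\rk(\operatorname{Inn}(S))=\rk(S)=n$. As $G=S\rtimes H$ is an internal semidirect product, $\rk(G)=\rk(S)+\rk(H)\leqslant 2n$. Combined with the displayed inequality this yields $n^2\leqslant 2n$, i.e.\ $n\leqslant 2$, contradicting $n\geqslant 3$. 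Hence no such action exists when $S$ is algebraic; in particular the conjecture is a theorem whenever $S$ is of even type, since then $S$ is a simple algebraic group over an \acfd\ of characteristic $2$ by the classification of simple groups of even type \cite{abc}, and simple groups of mixed type do not occur.

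The remaining, and genuinely hard, case is that of $S$ of odd or degenerate type, where algebraicity of $S$ is not available unconditionally, so Fact~\ref{automs} cannot be invoked to bound $\rk(H)$. Here I would try to replace the abstract bound on $\operatorname{Aut}(S)$ by information extracted directly from the generic $t$-transitivity. Peeling off the first coordinate using the regular $S$-action shows that $H$, acting on $S$ by automorphisms fixing $1$, has an orbit of rank $(t-1)n$, so $\rk(H)\geqslant(t-1)n\geqslant(n-1)n$, and generic $2$-transitivity moreover gives $H$ a full-rank (dense) orbit on $S$. At the same time, exactly as in the proof of the corollary to Fact~\ref{awalt} (following \cite[Lemma~3.1]{bbsharp}), generic $t$-transitivity produces a copy of $\operatorname{Sym}(t)$ inside $G$, so $\operatorname{Alt}(t)$ acts on $S$ by conjugation, faithfully once one knows it meets the (small) centraliser $C_G(S)$ trivially; and when $S$ has no involutions --- in particular in the degenerate case --- Fact~\ref{awalt} forces $\rk(S)\geqslant t\geqslant n=\rk(S)$, so $t=n$ and all the inequalities become tight. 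I expect the crux of a full proof to be reconciling this with $\rk(H)\geqslant(n-1)n$, i.e.\ ruling out a connected group $H$ of rank $\geqslant(n-1)n$ acting faithfully by automorphisms on a simple group $S$ of rank $n\geqslant 3$ with a dense orbit --- presumably via the structure theory of connected groups of odd type in \cite{abc}. In full generality the statement appears to be open, resting on the algebraicity of infinite simple groups of finite Morley rank, although, as the second paragraph shows, it already holds in even type.
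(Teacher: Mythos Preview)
The statement you are attempting to prove is stated in the paper as a \emph{conjecture}, not a theorem: the paper offers no proof at all, only the remark that ``confirming this conjecture is a feasible task.'' So there is no proof in the paper to compare your attempt against.

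Your reduction is clean and correct: with $S$ the simple regular normal subgroup and $H=\stab(x)$ you get $G=S\rtimes H$, the action is equivalent to the action on $S$ by translation and conjugation, $C_H(S)=1$ by faithfulness, and the generic orbit gives $\rk(G)\geqslant tn\geqslant n^2$. Your treatment of the case where $S$ is a simple algebraic group is also correct: Fact~\ref{automs} applied to $S\rtimes H^\circ$ yields $H^\circ\hookrightarrow\operatorname{Inn}(S)\cong S$, so $\rk(G)=n+\rk(H)\leqslant 2n$, and together with $n\geqslant 3$ (Cherlin's analysis of groups of small rank) this is a contradiction. Your corollary for $S$ of even type, via the even-type classification in \cite{abc}, is likewise valid.

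However, as you yourself say, the argument is \emph{not} a proof of the conjecture in full generality. The whole weight rests on bounding $\rk(H)$ by $\rk(S)$, and for this you invoke Fact~\ref{automs}, which requires $S$ to be algebraic. For $S$ of odd or degenerate type this is precisely the Algebraicity Conjecture, which is open. The heuristics you sketch for that case (the $\operatorname{Sym}(t)$ inside $G$, Fact~\ref{awalt} when $S$ has no involutions, the tension between $\rk(H)\geqslant (n-1)n$ and a putative bound on automorphism groups) are reasonable lines of attack, but they are not an argument: you have not produced any upper bound on $\rk(H)$ without algebraicity, and the equality $t=n$ you extract in the degenerate case does not by itself yield a contradiction. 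So what you have is a correct proof of the conjecture \emph{modulo} the Algebraicity Conjecture for $S$, plus an unconditional proof in even type---which is genuinely more than the paper provides---but the general statement remains open, exactly as you conclude.
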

{We believe that confirming this conjecture is a feasible task.}

Actually, we can suggest even a much stronger conjecture.

\begin{conjecture} A connected group of \fmr does not have a definably primitive action of regular type which is also generically $2$-transitive.
\end{conjecture}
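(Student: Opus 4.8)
The plan is to carry out, in the regular-type setting, the reduction used in the proof of Corollary~3, and then confront the resulting group-theoretic problem directly. First one invokes the regular-type half of the Macpherson--Pillay classification \cite{macpill}, the analogue of Fact~\ref{macpillaffine}: if $G \curvearrowright X$ is definably primitive of regular type, with definable normal simple regular subgroup $S$, then $G = S \rtimes \stab(x)$ for any $x \in X$ and $G \curvearrowright X$ is equivalent to $G \curvearrowright S$, where $S$ acts by left translation and $H := \stab(x)$ acts by the conjugation (hence automorphism) action on $S$; this action is definable and faithful. Normalising the first coordinate to $1 \in S$ by a translation, the $G$-orbit of $(1,s)$ equals $\{(a, ab) : a \in S,\ b \in s^{H}\}$, so $G \curvearrowright X$ is generically $2$-transitive if and only if $H$ has a generic orbit on $S$, that is, some $H$-orbit $O \subseteq S$ with $\rk(S \setminus O) < \rk(S)$. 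Passing from $H$ to $H^{\circ}$ (still faithful, still with a generic orbit, as $O$ is a finite union of $H^{\circ}$-orbits) we may assume $H$ connected; then $G = S \rtimes H$ is a connected group of \fmrd, $S \lhd G$ is simple, $C_{H}(S) = 1$, and $\rk(H) \geqslant \rk(S)$. Finally, in regular as opposed to diagonal type one has $C_{G}(S) = 1$, so by Fact~\ref{genfrat} $F^{*}(G) = E(G) = S$ and $F(G) = 1$.

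The heart of the matter is then a purely group-theoretic statement: \emph{a simple group $S$ of \fmr admits no connected definable subgroup $H \leqslant \operatorname{Aut}(S)$ with $S \rtimes H$ of \fmr and $H$ having a generic orbit on $S$.} Two ingredients are needed. First, \emph{$S$ has no conjugacy class of full Morley rank}: were $s^{S}$ generic, $C_{S}(s)$ would be finite; for $S$ algebraic this is impossible since a generic element is regular semisimple with positive-dimensional centraliser, and in general one would argue through a generous Carter subgroup $Q$ of $S$, whose conjugates cover a generic set while every element of $\bigcup_{g \in S} Q^{g}$ has centraliser containing a conjugate of the positive-rank group $Z(Q)$. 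Second, \emph{a connected definable group of automorphisms of $S$ is essentially inner}: for $S$ algebraic, Fact~\ref{automs} gives $H \leqslant \operatorname{Inn}(S) \cong S$ outright. Granting both, the conclusion is immediate: $H \leqslant \operatorname{Inn}(S)$ forces each $H$-orbit to lie inside a single $S$-conjugacy class, so a generic $H$-orbit would produce a full-rank $S$-conjugacy class, contradicting the first ingredient.

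Consequently the conjecture holds whenever $S$ is known to be a simple algebraic group over an \acfd. In particular it holds \emph{unconditionally when $S$ has even type}, by the classification of simple groups of \fmr and even type \cite{abc}; and more generally in any configuration producing a faithful linear representation of $S$ in positive characteristic, via Poizat's theorem (Fact~\ref{poizat}). I would record the even-type case first, as an unconditional partial result.

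The genuine obstacle is the case where $S$ is of odd or degenerate type and is \emph{not} a priori algebraic. Here both ingredients become delicate: controlling a connected definable group of \emph{outer} automorphisms of a non-algebraic simple group of \fmr is essentially a fragment of the Cherlin--Zilber programme, and even the ``no full-rank conjugacy class'' statement must be established without appeal to classification --- the existence of a generous Carter subgroup is not automatic, and degenerate-type simple groups (conjecturally nonexistent) would need separate treatment. I expect the control of outer automorphisms to be the principal barrier. The realistic route is therefore: prove the conjecture unconditionally for even type as above; extend it to odd type using the structure theory available for minimal connected simple groups and for the ``generic'' odd-type case; and otherwise state the result modulo the algebraicity of $S$. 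One might also try to exploit generic $2$-transitivity more aggressively --- analysing the two-point stabiliser $G_{1,s} = C_{H}(s)$, of rank $\rk(H) - \rk(S)$, and its induced action --- but this does not obviously avoid pinning down $\operatorname{Aut}(S)$.
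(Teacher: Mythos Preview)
This statement is Conjecture~3 in the paper: it is posed as an open problem, and the paper gives no proof or even a sketch. The remark that confirmation ``is a feasible task'' is attached to the weaker Conjecture~2, not to this one. There is therefore nothing in the paper to compare your attempt against.

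Your write-up is not a proof and does not pretend to be one; it is a correct reduction followed by an honest assessment of what remains. The passage via Macpherson--Pillay to the statement ``a connected definable group $H$ of automorphisms of a simple group $S$ of \fmr cannot have a generic orbit on $S$'' is sound (and, incidentally, does not actually need your aside that $C_G(S)=1$: faithfulness of $H\curvearrowright S$ already follows from regularity). Your argument that the desired statement holds whenever $S$ is algebraic---combining Fact~\ref{automs} with the absence of full-rank conjugacy classes in a simple algebraic group---is correct, and it does yield an unconditional result in even type via \cite{abc}. The gap you name is real and is exactly the obstruction: for $S$ of odd or degenerate type not known to be algebraic, controlling a connected definable group of outer automorphisms is essentially a fragment of the Cherlin--Zilber programme, and neither Fact~\ref{automs} nor a generous-Carter argument for conjugacy classes is available in general. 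So what you have is a genuine partial result together with a correct diagnosis of the missing ingredient, not a proof of the conjecture; the paper itself does not go further.
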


\end{document}